\documentclass[10pt]{amsart}


\usepackage{fullpage}
\setlength\marginparwidth{1.5cm}
\usepackage[utf8]{inputenc}
\usepackage[T1]{fontenc}


\usepackage{amsmath,amsfonts,amssymb,amsxtra,setspace,xspace,graphicx,lmodern,psfrag,epsfig,color,latexsym,mathtools}
\usepackage{subcaption}
\usepackage{dsfont}


\usepackage[colorlinks=true]{hyperref}
\hypersetup{
 urlcolor=blue,
 citecolor=red,
 pdfauthor={Harsha Hutridurga, Olga Mula and Francesco Salvarani},
 pdftitle={On the homogenization of neutron transport models},
 pdfsubject={},
 pdfkeywords={linear Boltzmann, neutron transport, homogenization, energy variable, self-shielding}
}

\graphicspath{{img/}}

\newtheorem{thm}{Theorem}
\newtheorem{lem}[thm]{Lemma}

\newtheorem{prop}[thm]{Proposition}
\newtheorem{rem}[thm]{Remark}
\newtheorem{defn}[thm]{Definition}


\newcommand{\cB}{\ensuremath{\mathcal{B}}}

\newcommand{\cK}{\ensuremath{\mathcal{K}}}
\newcommand{\cL}{\ensuremath{\mathcal{L}}}
\newcommand{\cM}{\ensuremath{\mathcal{M}}}

\newcommand{\cQ}{\ensuremath{\mathcal{Q}}}




\newcommand{\bR}{\ensuremath{\mathbb{R}}}
\newcommand{\bS}{\ensuremath{\mathbb{S}}}

\newcommand{\bV}{\ensuremath{\mathbb{V}}}



\newcommand{\bu}{\ensuremath{\textbf{u}}}

\newcommand{\bw}{\ensuremath{\textbf{w}}}


\newcommand{\rL}{\ensuremath{\mathrm{L}}}
\newcommand{\rY}{\ensuremath{\mathrm{Y}}}


\def\weak{\rightharpoonup}

\def\<{\langle}
\def\>{\rangle}
\def\({\left(}
\def\){\right)}
\newcommand{\be}{\begin{equation}}
\newcommand{\ee}{\end{equation}}


\newcommand{\dE}{\ensuremath{\,\rm dE}}
\newcommand{\domega}{\ensuremath{\,\rm d}\omega}

\newcommand{\per}{\ensuremath{\rm per}}
\newcommand{\Lper}{\ensuremath{\rL^2_{\per}}}

\newcommand{\R}{{\mathbb R}}
\renewcommand{\S}{{\mathbb S}}

\newcommand{\eps}{{\varepsilon}}

\newcommand{\Emin}{\ensuremath{E_{\min}}}
\newcommand{\Emax}{\ensuremath{E_{\max}}}
\newcommand{\Eint}{\ensuremath{(\Emin, \Emax)}}






\begin{document}

\title[Homogenization in Neutron transport]{Homogenization in the energy variable\\ for a neutron transport model}

\author{Harsha Hutridurga}
\address{H.H.: Department of Mathematics, Indian Institute of Technology Bombay, Powai, Mumbai 400076, India.}
\email{hutri@math.iitb.ac.in}

\author{Olga Mula}
\address{O.M.: Universit\'e Paris-Dauphine, PSL University, CNRS, UMR 7534, CEREMADE, 75016 Paris, France.}
\email{mula@ceremade.dauphine.fr}

\author{Francesco Salvarani}
\address{F.S.: Universit\'e Paris-Dauphine, PSL University, CNRS, UMR 7534, CEREMADE, 75016 Paris, France \& Universit\`a degli Studi di Pavia, Dipartimento di
Matematica, I-27100 Pavia, Italy} 
\email{francesco.salvarani@unipv.it}

\begin{abstract}
This article addresses the homogenization of linear Boltzmann equation when the optical parameters are highly heterogeneous in the energy variable. We employ the method of two-scale convergence to arrive at the homogenization result. In doing so, we show the induction of a memory effect in the homogenization limit. We also provide some numerical experiments to illustrate our results. In the Appendix, we treat a related case of the harmonic oscillator.
\end{abstract}

\date{\today}

\maketitle

\thispagestyle{empty}

\section{Introduction}
Neutron transport plays a key role in a number of scientific and engineering areas. It is, for example, relevant in understanding certain atmospheric processes and it also plays a major role in the field of nuclear engineering -- the safety of nuclear reactors and shielding. The evolution of neutrons in all these problems can be modelled by a linear Boltzmann equation whose coefficients, usually called optical parameters or microscopic cross sections, present rapid oscillations related to the neutron energy. In this work, we derive two-scale homogenization results related to this phenomenon which reveal the presence of a memory effect in the limit equation.

In what follows, $\Omega$ is a bounded domain of $\bR^d$ with $\mathrm{C}^1$ boundary and $\rY:=(0,1)^d$ is the unit cube in $\bR^d$. Denoting by $f = f(t,x,v)$ the \emph{population density} of neutrons which are located at $x\in\Omega$ (position) at time $t\in\bR_+$ and travelling with velocity $v\in \bV\subset\bR^d$, the neutron gas can be described by the linear Boltzmann equation (sometimes called the \emph{neutron transport equation})
\begin{equation}
\label{eq:linear-Boltzmann}
\begin{aligned}
\partial_t f + v\cdot \nabla_x f + \sigma(x,v) f - \int_{\bV} \kappa(x,v \cdot v') f(t,x,v')\, {\rm d}v' & = 0,
\end{aligned}
\end{equation}
where the non-negative functions $\sigma$ and $\kappa$ denote the \emph{total cross-section} of the background material and the \emph{scattering kernel} respectively. In the following, we will sometimes refer to the pair $(\sigma,\kappa)$ as the \emph{optical parameters}. Note that the velocity variable $v$ can be expressed via the couple $(\omega,E)$, where $\omega=v/|v|$ is the trajectory angle of the neutron and $E=m|v|^2/2$ is the kinetic energy, $m$ being the mass of the neutron. Assuming that $E$ ranges in $[E_{\min},E_{\max}]$, the same equation can equivalently be written in terms of the \emph{neutron flux} 
$$
\varphi(t,x,\omega,E) = \varphi(t,x,v) := \vert v\vert f(t,x,v),
$$
which satisfies the equation
\begin{align}
\label{eq:flux}
\sqrt{\frac{m}{2E}}\, \partial_t \varphi+ \omega \cdot \nabla_x \varphi + \sigma\left(x, \omega, E \right) \varphi
- \int_{E_{\rm min}}^{E_{\rm max}} \int_{\vert \omega'\vert=1} \kappa\left(x,\omega\cdot\omega', E,E'\right) \varphi(x,\omega',E'){\rm d}\omega'\, {\rm d}E' = 0,
\end{align}
where the optical parameters of the linear Boltzmann equation are appropriately redefined. The above evolution equation shall be supplemented by suitable initial data, i.e., $
\varphi(0,x,\omega,E) = \varphi_{\rm in}(x,\omega,E)$, and boundary data on the \emph{incoming phase-space boundary}. For simplicity, we shall consider
\emph{absorption-type} boundary data, i.e.,
\begin{align*}
\varphi(t,x,\omega,E) = 0 
\quad \forall t>0 \, \mbox{ and for }(x,\omega)\in \Gamma_-:= \left\{ (x,\omega)\in \partial\Omega\times\bS^{d-1}\ :\ {n}(x)\cdot \omega < 0\right\},
\end{align*}
where ${n}(x)$ denotes the unit exterior normal to the boundary $\partial\Omega$ at the point $x$.

The numerical computation of solutions to \eqref{eq:flux} is challenging for the following reasons:
\begin{itemize}
\item[-] the problem is high-dimensional: its solution depends on $(2d+1)$ variables and the integral term couples the angular and energy variables and gives rise to dense matrices,
\item[-] the solutions have low regularity,
\item[-] the optical parameters can present high oscillations in space (due to the spatial heterogeneity of the materials) and in energy.
\end{itemize}
The first two items have been the main focus of numerous works on the numerical analysis of equation \eqref{eq:linear-Boltzmann} and \eqref{eq:flux}. We refer to \cite{LE1984,Agoshkov1998,Kanschat2009} for general references on the topic and to a recent work \cite{DGM2018} where modern operator compression and adaptive techniques have been applied to efficiently attack the high-dimensional aspects of the problem.

The last item has motivated a considerably huge amount of literature in the theory of homogenization (see \cite{Dumas_2000} and references therein). However, to the best of our knowledge, the existing mathematical theory addresses only high oscillations in the spatial variable and no rigorous results seem to address high oscillations in the energy variable. This point has been treated thus far only in the engineering community where the problem is known as energy self-shielding or resonant absorption, which will be further commented on towards the end of this introduction. In this context, the present contribution is to bridge the gap between theory and practice by arriving at some rigorous homogenization results. This involves certain modelling assumptions on the multiscale behavior of the optical parameters which are in agreement with physical observations. Real experiments reveal strong oscillations in $\sigma$ as a function of $E$ when the neutrons interact with relevant materials like, for example, Uranium 238 (see figure \ref{img:totalXS}). A similar behaviour is also observed for the scattering kernel $\kappa$.
These facts motivate us to study the multi-scale linear Boltzmann equation
\begin{equation}
\label{eq:delta-evol-neutron-flux}
\begin{aligned}
\sqrt{\frac{m}{2E}}\, \partial_t \varphi^\varepsilon + \omega \cdot \nabla_x \varphi^\varepsilon & + \sigma^\varepsilon\left(x, \omega, E \right) \varphi^\varepsilon
- \int_{E_{\rm min}}^{E_{\rm max}} \int_{\vert \omega'\vert=1} \kappa^\varepsilon\left(x,\omega\cdot\omega', E,E'\right) \varphi^\varepsilon(x,\omega',E'){\rm d}\omega'\, {\rm d}E' = 0,
\end{aligned}
\end{equation}
where $0<\varepsilon\ll1$ is a small parameter and
\begin{align}
\label{eq:multiscaleOP}
\sigma^\varepsilon(x,\omega,E) = \sigma\left(x, \omega,E,\frac{E}{\varepsilon} \right);
\qquad
\kappa^\varepsilon(x,\omega\cdot\omega',E,E') = \kappa\left(x,\omega\cdot\omega',E,E',\frac{E'}{\varepsilon}\right),
\end{align}
where $\sigma(x,\omega,E,y)$ and $\kappa\left(x,\omega\cdot\omega',E,E',y'\right)$ are both assumed to be periodic in the $y$ and $y'$ variables respectively. The equation is complemented with zero incoming flux condition on the phase-space boundary and an initial condition $\varphi_{\text{in}}$ which we assume to be in $\rL^2(\Omega\times\S^{d-1}\times (E_{\min}, E_{\max}) )$.

In addition to the above hypotheses, we also assume that there exists $\alpha>0$ such that for all $\varepsilon >0$,
\begin{align}
\label{eq:hypOP}
\sigma^\varepsilon (x,\omega, E) - \bar\kappa^\varepsilon(x,\omega, E) \geq \alpha
\quad \text{and} \quad
\sigma^\varepsilon (x,\omega, E) - \tilde\kappa^\varepsilon(x,\omega, E) \geq \alpha
,
\end{align}
where
\begin{equation}
\label{eq:kappa_tilde}
\begin{cases}
\bar\kappa^\varepsilon(x,\omega, E) &\displaystyle \coloneqq \int_{E_{\min}}^{E_{\max}} \int_{\S^{d-1}} \kappa^\varepsilon(x,\omega\cdot\omega',E,E') \,\domega'\dE'
\\[10pt]
\tilde\kappa^\varepsilon(x,\omega, E) &\displaystyle  \coloneqq \int_{E_{\min}}^{E_{\max}} \int_{\S^{d-1}} \kappa^\varepsilon(x,\omega\cdot\omega',E',E) \,\domega'\dE'.
\end{cases}
\end{equation}
From a physical point of view, these assumptions mean that we place ourselves in the so-called subcritical regime where absorption phenomena dominate scattering. 

\begin{figure}[h]
\includegraphics[scale=0.8]{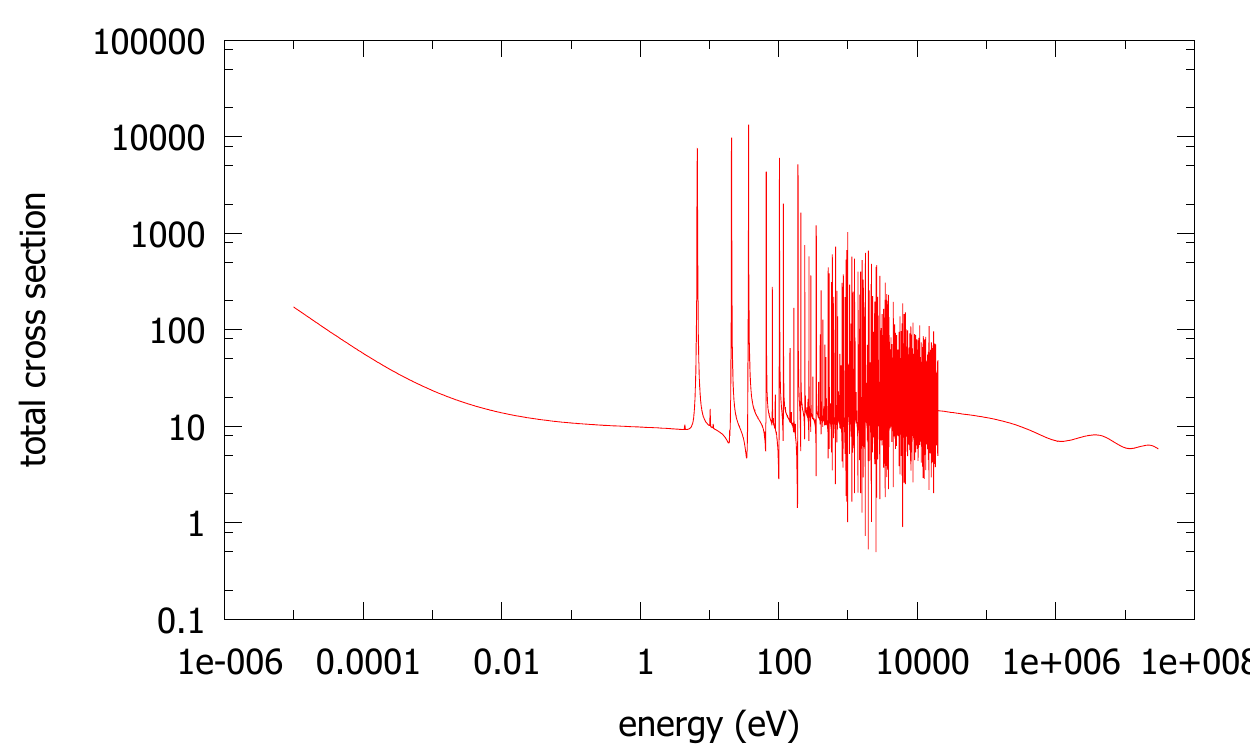}
\caption{Total cross-section $\sigma$ of Uranium 238 as a function of the energy according to the JEFF 3.1 library \cite{jeff}. Note the highly oscillatory interval for $E\in[1\text{ eV};\,10^4\text{ eV}]$.}
\label{img:totalXS}
\end{figure}

From the homogenization viewpoint, transport dominated equations such as \eqref{eq:delta-evol-neutron-flux} are particularly challenging since the structure of the partial differential equation becomes more complex after taking the homogenization limit. This is due to the memory effects induced in the limit that make the dynamics be no longer defined by a semigroup \cite{Tartar_2006, Bernard_2010, MS2013}. This, in turn, entails difficulties in the numerical solution of the homogenized equation since the memory effects dramatically increase the computational complexity in terms of the number of degrees of freedom to be used in order to retrieve a certain target accuracy.

Our main contribution is the homogenization result given in section \ref{sec:hom_linear_Boltzmann}, where we derive a homogenized equation for the neutron transport problem when the optical parameters oscillate periodically in the energy variable. The result is derived employing the theory of two-scale convergence. For technical reasons that we will explain later on in more detail, it is more convenient to work with scattering kernels $\kappa^\varepsilon$ exhibiting separation in the $E$ and $E'$ variables as follows:
$$
\kappa^\varepsilon(x, \omega\cdot\omega',E,E') : = \kappa_1(x, \omega\cdot\omega',E) \kappa_2\( x, \omega\cdot\omega',E', \frac{E'}{\varepsilon} \) 
$$
with $\kappa_2\( x, \omega\cdot\omega',E', y' \) $ being periodic in the $y'$ variable. A result without this assumptions on $\kappa^\varepsilon$ requires further investigation as it is not apparent whether one can derive closed form homogenized equations in the latter case.

The homogenized equation is integro-differential and presents a memory term. To derive the results, we base our strategy on the method of characteristics, and hence we first derive a homogenization result for the associated ordinary differential equation (Section \ref{sec:ode}). In there, we also show that this result is in agreement with previous works on memory effects by Tartar \cite{Tartar1989}, \cite[chapter 35]{Tartar_2006}. An interesting result in its own right is that our technique gives an explicit expression of the memory kernel that, in the situation studied by Tartar, is equal to the implicit expression given in \cite{Tartar1989}, \cite[chapter 35]{Tartar_2006}. Finally we consider in section \ref{sec:numerics} some numerical experiments on a simpler model, mainly focused on the convergence rate towards the homogenized solution as the period of the oscillating terms tends to zero (i.e., rapid oscillations). In the Appendix, we treat the related case of a harmonic oscillator.

We conclude this introduction by comparing our approach to some standard methods from the nuclear engineering community for treating self-shielding phenomena. To the best of the authors' knowledge, the most widespread technique is a two-stage method originially proposed in \cite{LJ1974} by M.~Livolant and F.~Jeanpierre (we refer to \cite[Chapters 8 and 15]{Reuss2008} for an introductory overview). It consists in finding first the averaged optical parameters which are then plugged into a multigroup version of equation \eqref{eq:linear-Boltzmann} to compute the behavior of the flux on large and geometrically complex domains such as nuclear reactors. The pre-computation of the averaged parameters is done on a cell problem involving a much simpler spatial geometry and simplified physics. It is nevertheless carefully designed with elaborate physical considerations in a way to keep as much consistency as possible with respect to the original problem. Note that, while this approach implicitly assumes that the homogenized equation is of the same nature as the original Boltzmann problem \eqref{eq:linear-Boltzmann}, our starting point is fundamentally different in the sense that we do not postulate any final form of the limit equation. Our goal is precisely to discover its form from the only assumption that the optical parameters oscillate in energy. As a result, our methodology and conclusions are different from the ones discussed in \cite{LJ1974}
and do not involve a pre-computation on a cell problem. Another approach, also based on averaging the optical parameters, is the so-called multi-band method (see \cite{CP1980}), where, like in the previous method, the limit equation is assumed to be a Boltzmann equation. Finally, a more recent approach based on averaging arguments taken from results of homogenization of pure transport equations has recently been proposed in \cite{HAJLT2017}. The initial problem there is a time-independent Boltzmann source problem with no oscillations in the scattering kernel.


\section{Homogenization of an ordinary differential equation}
\label{sec:ode}
It turns out that the homogenization of the multi-scale linear Boltzmann equation \eqref{eq:delta-evol-neutron-flux} is closely related to the homogenization of an evolution equation studied by Luc Tartar in the 1980's. Hence, we begin this section by recalling his example and by giving an alternate proof for the phenomenon of memory effect by homogenization, as was demonstrated by him in \cite{Tartar1989}. For the unknown $u^\varepsilon$, consider the differential equation
\begin{equation}\label{eq:tartar:model}
\partial_t u^\varepsilon + \sigma\left(\frac{x}{\eps}\right) u^\eps = 0; \qquad \qquad u^\eps(0,x) = u_{\rm in}(x).
\end{equation}
Even though Tartar treats a more general setting, Theorem \ref{thm:tartar-compare} recalls the result from \cite{Tartar1989} adapted to when $\sigma$ has only periodic oscillations. In the sequel, we use the following notation for the Laplace transform (in the time variable) of a function:
\[
\widehat{f}(p) := \int_0^\infty e^{-ps} f(s)\, {\rm d}s
\qquad \mbox{ for }p>0.
\]
In the section, for any $v\in \rL^1(\rY)$,
$$
\langle v \rangle \coloneqq \int_\rY v(y) \, {\rm d}y
$$
denotes the average of $v$ in $\rY$.
\begin{thm}[Tartar, \cite{Tartar1989}]
\label{thm:tartar-compare}
Let the coefficient $\sigma(\cdot)$ in \eqref{eq:tartar:model} be a strictly positive, bounded and purely periodic coefficient of period $\mathrm Y$. Then the $\mathrm L^\infty$ weak $\ast$ limit $u_{\mathrm{hom}}(t,x)$ of the solution family $u^\eps$ satisfies the following integro-differential equation
\begin{equation}\label{eq:evol-u-hom-tartar}
\left\{
\begin{aligned}
\partial_t u_{\rm hom}(t,x) + \langle\sigma\rangle u_{\rm hom}(t,x) - \int_0^t \mathcal{M}(t-s) u_{\rm hom}(s,x) \, {\rm d}s & = 0
\\
u_{\rm hom}(0,x) &= u_{\rm in}(x)
\end{aligned}
\right.
\end{equation}
where the memory kernel $\mathcal{M}(\tau)$ is given in terms of its Laplace transform as follows
\begin{align}\label{eq:equiv:Tartar-Laplace}
\widehat{\mathcal{M}}(p) = p + \langle{\sigma}\rangle - \mathcal{B}(p) = \int_\rY \Big( p + \sigma(y) - \mathcal{B}(p) \Big)\, {\rm d}y\qquad \forall p>0,
\end{align}
with the constant $\mathcal{B}(p) $ taking the value
\begin{align*}
\mathcal{B}(p) := \left( \int_\rY \frac{{\rm d}y}{p + \sigma(y)}\right)^{-1}.
\end{align*}
\end{thm}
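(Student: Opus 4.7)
The plan is to exploit the fact that \eqref{eq:tartar:model} is, for each fixed $x$, an ODE with an explicit solution, and to identify the homogenized equation by passing through the Laplace transform in time.

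I would first write the explicit solution $u^\eps(t,x) = u_{\rm in}(x)\, e^{-t\sigma(x/\eps)}$. Since $\sigma$ is bounded below and above by strictly positive constants, $\{u^\eps\}$ is uniformly bounded in $\rL^\infty(\bR_+\times\Omega)$, and the classical weak$\ast$ convergence of rapidly oscillating periodic functions, applied for each fixed $t$ to $y\mapsto e^{-t\sigma(y)}$, yields
\[
u^\eps \;\weakStar\; u_{\rm hom}(t,x) \;\coloneqq\; u_{\rm in}(x) \int_\rY e^{-t\sigma(y)}\,\dd y.
\]

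Next, I would characterize $u_{\rm hom}$ as the solution of an integro-differential equation by working in the Laplace variable. Taking the Laplace transform of the explicit formula gives $\widehat{u^\eps}(p,x) = u_{\rm in}(x)/(p+\sigma(x/\eps))$, and the same periodic-homogenization argument applied to the bounded function $y\mapsto 1/(p+\sigma(y))$ produces
\[
\widehat{u_{\rm hom}}(p,x) \;=\; u_{\rm in}(x) \int_\rY \frac{\dd y}{p+\sigma(y)} \;=\; \frac{u_{\rm in}(x)}{\cB(p)}.
\]
Rewriting $\cB(p)\,\widehat{u_{\rm hom}}(p,x) = u_{\rm in}(x)$ in the form
\[
\bigl(p + \langle\sigma\rangle - \widehat{\cM}(p)\bigr)\,\widehat{u_{\rm hom}}(p,x) \;=\; u_{\rm in}(x), \qquad \widehat{\cM}(p) \coloneqq p + \langle\sigma\rangle - \cB(p),
\]
one recognizes exactly the Laplace transform of \eqref{eq:evol-u-hom-tartar}; the second equality in \eqref{eq:equiv:Tartar-Laplace} is then a mere restatement using $|\rY|=1$. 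Inverting the Laplace transform delivers the claimed memory equation.

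The main obstacle is to check that $\widehat{\cM}$ is actually the Laplace transform of an object for which the convolution in \eqref{eq:evol-u-hom-tartar} is well-defined. A short Neumann-series expansion in $1/p$ gives $\cB(p) = p + \langle\sigma\rangle - p^{-1}(\langle\sigma^2\rangle-\langle\sigma\rangle^2) + O(p^{-2})$, so $\widehat{\cM}(p)=O(1/p)$ as $p\to\infty$; combined with the analyticity of $\cB$ in the right half-plane, standard inverse Laplace theory (together with Bernstein's theorem applied to $1/\cB$, which is completely monotone as a harmonic average of the completely monotone functions $p\mapsto 1/(p+\sigma(y))$) yields $\cM\in\rL^1_{\rm loc}(\bR_+)$. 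A secondary technical point, the commutation of the Laplace transform in $t$ with the weak$\ast$ limit in $\eps$, is handled by noting that $e^{-pt}\mathbf{1}_{t>0}\in\rL^1(\bR_+)$ for each $p>0$.
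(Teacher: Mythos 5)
Your argument is correct in outline and reproduces what the paper (deliberately) does \emph{not} do: it is essentially Tartar's original Laplace-transform proof, which the paper attributes to \cite{Tartar1989} without reproducing. The paper's own route to the memory kernel, Theorem \ref{thm:exm:ode}, goes through two-scale convergence: it writes the explicit solution, passes to the two-scale limit $u^0(t,x,y)$, derives the coupled cell system \eqref{eq:exm:coupled-system} for $u_{\rm hom}$ and the zero-mean remainder $r$, solves for $r$ via the semigroup $e^{-t\cL_\sigma}$ and substitutes back, producing the \emph{time-domain} kernel $\cK(\tau) = \int_\rY \sigma(y)\,e^{-\tau\cL_\sigma}\cL_1\sigma(y)\,\dd y$. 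Theorem \ref{thm:tartar-compare} is then recovered by the separate computation $\widehat{\cK}=\widehat{\cM}$. The trade-off is clear: your route reaches \eqref{eq:evol-u-hom-tartar} faster and applies even to non-periodic $\sigma^\eps$ once one replaces the periodic mean with a Young measure (exactly Tartar's generality, and the Appendix's technique for the harmonic oscillator), whereas the paper's route is tied to periodicity but yields a closed-form $\cM(\tau)=\cK(\tau)$ with no Laplace inversion, and generalizes cleanly to locally periodic $\sigma(x,y)$, oscillating sources, and ultimately to the transport equation.

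Two points deserve tightening. First, a terminology slip: $\cB(p)$ is the harmonic mean of $p+\sigma(y)$, so $1/\cB(p)=\int_\rY\frac{\dd y}{p+\sigma(y)}$ is the \emph{arithmetic} (not harmonic) average of the completely monotone functions $1/(p+\sigma(y))$; the complete monotonicity conclusion is still fine, as mixtures of completely monotone functions are completely monotone. Second, and more substantively, the chain ``$\widehat{\cM}(p)=O(1/p)$, analyticity, Bernstein on $1/\cB$'' does not by itself yield $\cM\in\rL^1_{\rm loc}(\bR_+)$ --- complete monotonicity of $1/\cB$ only tells you about $u_{\rm hom}$, not about $\cM$. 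A cleaner closing argument: set $G(t):=\langle e^{-t\sigma}\rangle$, so $\widehat G=1/\cB$, $G(0)=1$, $G'(0)=-\langle\sigma\rangle$. Then $\cB\widehat{u_{\rm hom}}=u_{\rm in}$ becomes, after multiplying $\widehat{\cM}=p+\langle\sigma\rangle-\cB$ by $\widehat G$ and using $p\widehat G-1=\widehat{G'}$, the identity $\widehat{\cM}\,\widehat G = \widehat{G'}+\langle\sigma\rangle\widehat G$, i.e.\ the Volterra equation of the first kind $\cM * G = G'+\langle\sigma\rangle G$. Since $G$ is smooth with $G(0)=1\neq 0$ and the right-hand side is smooth and vanishes at $t=0$, this has a unique smooth solution $\cM$, which is in particular locally integrable. (Alternatively: the paper's $\cK$ in \eqref{eq:memoryKernel} is manifestly bounded and smooth, and the $\widehat{\cK}=\widehat{\cM}$ computation identifies $\cM=\cK$.)
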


Note that the memory kernel is expressed implicitly by means of the Laplace transform (see formula \eqref{eq:equiv:Tartar-Laplace}). This is due to the fact that the proof given in \cite{Tartar1989} goes via taking the Laplace transform in the $t$ variable. Contrary to this, our alternative approach yields directly an explicit expression without necessitating the use of Laplace transform in the proof. This is because we place ourselves in the periodic homogenization setting, which will in turn aid us in homogenizing the neutron transport equation \eqref{eq:delta-evol-neutron-flux}.



Our starting point is the following evolution equation (we consider oscillatory sources as this will resemble more our final targeted equation \eqref{eq:delta-evol-neutron-flux}):
\begin{equation}\label{eq:exm:model}
\left\{
\begin{aligned}
\partial_t u^\eps(t,x) + \sigma^\eps(x) u^\eps(t,x) & = f^\eps (t,x) \qquad \mbox{ for }(t,x)\in(0,T)\times\Omega,
\\[0.2 cm]
u^\eps(0,x) & = u^\eps_{\rm in}\left(x\right) \quad \quad \mbox{ for }x\in\Omega,
\end{aligned}\right.
\end{equation}
where $T<\infty$ is arbitrary. The coefficients and data in \eqref{eq:exm:model} are of the form
\[
\sigma^\eps(x) := \sigma\left(x,\frac{x}{\eps}\right),\quad f^\eps(t,x) := f\left(t,x,\frac{x}{\eps}\right), \quad u^\eps_{\rm in}(x) := u_{\rm in}\left(x,\frac{x}{\eps}\right),
\]
with the locally periodic (in space) coefficient
$$
\sigma(x,y)\in \rL^\infty(\Omega;\mathrm C_{\per}(\rY))
$$
and data
$$
f(t,x,y)\in \rL^\infty((0,T)\times \Omega;\mathrm C_{\per}(\rY))  , \quad u_{\rm in}(x,y)\in\rL^2(\Omega;\mathrm C_{\per}(\rY)).
$$
Here we have used the standard notation $\mathrm C_{\rm per}(\rY)$ to denote continuous functions on $\R^d$ which are $\rY$-periodic. Furthermore we assume that there exists a positive constant $\sigma_{\rm min}$ such that
$$
\sigma(x,y) \geq \sigma_{\min}\quad \forall\, (x,y)\in \Omega\times \rY.
$$

Since our analysis relies on the notion of \emph{two-scale convergence}, before stating our homogenization result, we first recall some basic results and introduce some notation (for proofs and further discussions, readers are encouraged to refer to the works cited here). Two-scale convergence was also used by J.-S.~Jiang \cite{Jiang2006} to treat the case with $u^\eps_{\rm in}=0$ and $f^\eps(x)=f(x)$ for all $\eps>0$. In that work, the resulting memory kernel was again given implicitly, this time in terms of a certain Volterra equation.

Two-scale convergence was introduced by Gabriel Nguetseng \cite{Nguetseng1989} and further developed by Gr\'egoire Allaire \cite{Allaire_1992}. We start by recalling its definition.
\begin{defn}\label{defn:two-scale}
A family of functions $v^\eps(x)\subset \rL^2(\Omega)$ is said to \emph{two-scale converge} to a limit $v^0(x,y)\in \rL^2(\Omega\times \rY)$ if, for any smooth test function $\psi(x,y)$, $\rY$-periodic in the $y$ variable, we have
\begin{align*}
\lim_{\eps\to0} \int_\Omega v^\eps(x) \psi\left( x, \frac{x}{\eps} \right) \, {\rm d}x = \int_\Omega \int_Y v^0(x,y) \psi(x,y) \, {\rm d}x\, {\rm d}y.
\end{align*}
\end{defn}
A distinctive result in the two-scale convergence theory is the following result of compactness which says that the above notion of convergence is not void.
\begin{thm}[\cite{Nguetseng1989, Allaire_1992}]\label{thm:two-scale-compact}
Suppose a family $v^\eps(x)\subset \rL^2(\Omega)$ is uniformly bounded, i.e.,
\[
\left\Vert v^\eps \right\Vert_{\rL^2(\Omega)} \le C
\]
with constant $C$ being independent of $\eps$. Then, we can extract a sub-sequence (still denoted $v^\eps$) such that $v^\eps$ two-scale converges to some limit $v^0(x,y)\in\rL^2(\Omega\times Y)$.
\end{thm}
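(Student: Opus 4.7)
The plan is to prove Theorem~\ref{thm:two-scale-compact} along the classical duality strategy of Nguetseng~\cite{Nguetseng1989} and Allaire~\cite{Allaire_1992}. The idea is to view each $v^\eps$ as a continuous linear functional on a suitable separable Banach space of admissible test functions, extract a weakly-$\ast$ convergent subsequence via Banach--Alaoglu, and finally represent the limit as an $\rL^2$ inner product on $\Omega\times\rY$ through Riesz representation. The natural choice of test-function space is
\[
X \coloneqq \rL^2\bigl(\Omega;\mathrm C_{\per}(\rY)\bigr),
\]
which is separable and continuously embedded in $\rL^2(\Omega\times\rY)$ through $\|\psi\|_{\rL^2(\Omega\times\rY)}\le \|\psi\|_X$.

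For each $\eps>0$ I would define the functional
\[
\mu_\eps(\psi) \coloneqq \int_\Omega v^\eps(x)\,\psi\!\left(x,\tfrac{x}{\eps}\right){\rm d}x, \qquad \psi\in X.
\]
The key auxiliary lemma is an \emph{admissibility estimate}: for any $\psi\in X$, the map $x\mapsto \psi(x,x/\eps)$ is measurable on $\Omega$, satisfies $\|\psi(\cdot,\cdot/\eps)\|_{\rL^2(\Omega)} \le \|\psi\|_X$, and, more crucially,
\[
\lim_{\eps\to 0}\left\|\psi\!\left(\cdot,\tfrac{\cdot}{\eps}\right)\right\|_{\rL^2(\Omega)}^2 = \int_\Omega\!\int_\rY |\psi(x,y)|^2 \, {\rm d}y\,{\rm d}x.
\]
Combined with Cauchy--Schwarz and the uniform bound $\|v^\eps\|_{\rL^2(\Omega)}\le C$, this gives $|\mu_\eps(\psi)|\le C\,\|\psi\|_X$, so $(\mu_\eps)$ is bounded in $X^{\ast}$.

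Since $X$ is separable, Banach--Alaoglu furnishes a subsequence (still denoted $\mu_\eps$) and a $\mu_0\in X^{\ast}$ such that $\mu_\eps(\psi)\to \mu_0(\psi)$ for every $\psi\in X$. Passing to the limit $\eps\to 0$ in the bound $|\mu_\eps(\psi)|\le \|v^\eps\|_{\rL^2(\Omega)}\|\psi(\cdot,\cdot/\eps)\|_{\rL^2(\Omega)}$ and using the mean-convergence identity above yields the sharper estimate $|\mu_0(\psi)|\le C\,\|\psi\|_{\rL^2(\Omega\times\rY)}$. Since tensor products $\phi(x)\theta(y)$ with $\phi\in C^\infty_c(\Omega)$ and $\theta\in C^\infty_{\per}(\rY)$ lie in $X$ and their finite linear combinations are dense in $\rL^2(\Omega\times\rY)$, the functional $\mu_0$ extends uniquely to a bounded linear functional on the Hilbert space $\rL^2(\Omega\times\rY)$. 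Riesz representation then produces a unique $v^0\in\rL^2(\Omega\times\rY)$ with
\[
\mu_0(\psi) = \int_\Omega\!\int_\rY v^0(x,y)\,\psi(x,y)\, {\rm d}y\,{\rm d}x,
\]
which is precisely the two-scale convergence relation in Definition~\ref{defn:two-scale}.

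The main obstacle is establishing the mean-convergence identity for $\psi\in X$. For separated smooth test functions $\psi(x,y)=\phi(x)\theta(y)$ the argument reduces to the Riemann--Lebesgue-type statement $\int_\Omega \phi(x)^2\theta(x/\eps)^2\, {\rm d}x \to \int_\Omega \phi(x)^2\,{\rm d}x \int_\rY \theta(y)^2\,{\rm d}y$ (recall $|\rY|=1$), which follows from a standard covering of $\Omega$ by $\eps$-cubes and the periodicity of $\theta$. The delicate point is extending this identity to general $\psi\in X$, where one cannot invoke uniform continuity in $y$ over $x\in\Omega$; a careful approximation argument is needed that respects the asymmetric regularity of $X$ (continuous in $y$, only $\rL^2$ in $x$). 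This is precisely the reason why the class $\rL^2(\Omega;\mathrm C_{\per}(\rY))$ is singled out as the space of admissible test functions, and once this step is in place the remainder of the proof is a routine application of Banach--Alaoglu and Riesz representation.
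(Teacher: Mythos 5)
The paper does not actually prove this statement: Theorem~\ref{thm:two-scale-compact} is quoted as a known result, with the proof deferred to Nguetseng~\cite{Nguetseng1989} and Allaire~\cite{Allaire_1992}. Your sketch reproduces the Allaire strategy faithfully: view $\mu_\eps(\psi) = \int_\Omega v^\eps(x)\,\psi(x,x/\eps)\,{\rm d}x$ as a bounded family in the dual of the separable space $X=\rL^2(\Omega;\mathrm C_{\per}(\rY))$, extract a weak-$\ast$ limit $\mu_0$, upgrade the bound to $|\mu_0(\psi)|\le C\|\psi\|_{\rL^2(\Omega\times\rY)}$ via the admissibility identity $\|\psi(\cdot,\cdot/\eps)\|_{\rL^2(\Omega)}\to\|\psi\|_{\rL^2(\Omega\times\rY)}$, then invoke density and Riesz representation. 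All the elements you cite (separability of $X$, the bound $\|\psi(\cdot,\cdot/\eps)\|_{\rL^2(\Omega)}\le\|\psi\|_X$, density of tensor-product test functions in $\rL^2(\Omega\times\rY)$) are correct and are exactly what Allaire uses. The one place you do not close the argument is the admissibility lemma itself, which you correctly single out as the technical heart. For the record, the standard way to finish it is to approximate $\psi\in\rL^2(\Omega;\mathrm C_{\per}(\rY))$ in that norm by simple functions $\sum_k \mathds{1}_{A_k}(x)\theta_k(y)$ with $\theta_k\in\mathrm C_{\per}(\rY)$, apply the Riemann--Lebesgue (periodic averaging) statement to each term, and control the approximation error uniformly in $\eps$ via the pointwise bound you already have, $\|\psi(\cdot,\cdot/\eps)\|_{\rL^2(\Omega)}\le\|\psi\|_X$. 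With that step supplied, your proof is complete and coincides with the argument in the reference the paper points to; there is no divergence of approach to report.
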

We additionally have the following useful property of the two-scale limit. 
\begin{prop}[\cite{Nguetseng1989, Allaire_1992}]
\label{prop:weaklimit2scale}
Let $v^\eps$ be a sequence of functions in $\rL^2(\Omega)$ which two-scale converges to a limit $v^0\in\rL^2(\Omega\times\rY)$. Then $v^\eps(x)$ converges to $\langle{v}\rangle(x)=\int_\rY v^0(x,y)\, {\rm d}y$ weakly in $\rL^2(\Omega)$, i.e.,
\[
\lim_{\eps\to0} \int_\Omega v^\eps(x) \varphi(x)\, {\rm d}x = \int_\Omega \varphi(x) \int_Y v^0(x,y) \, {\rm d}y\, {\rm d}x
\qquad \mbox{ for all }\varphi \in \mathrm L^2(\Omega).
\]
\end{prop}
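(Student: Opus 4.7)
The plan is to exploit the freedom in Definition \ref{defn:two-scale} by choosing test functions that are independent of the fast variable $y$. For any $\varphi\in \mathrm C_c^\infty(\Omega)$, the function $\psi(x,y):=\varphi(x)$ is smooth and trivially $\rY$-periodic in $y$, hence admissible in the definition of two-scale convergence. Substituting it yields immediately
\[
\lim_{\eps\to 0}\int_\Omega v^\eps(x)\varphi(x)\dx \;=\; \int_\Omega\int_\rY v^0(x,y)\varphi(x)\dy\dx \;=\; \int_\Omega\varphi(x)\langle v\rangle(x)\dx,
\]
which is exactly the claimed weak convergence, but tested only against the dense subspace $\mathrm C_c^\infty(\Omega)$ of $\rL^2(\Omega)$.

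To promote this to arbitrary $\varphi\in \rL^2(\Omega)$, I would first establish the uniform bound $\|v^\eps\|_{\rL^2(\Omega)}\leq C$. A clean route is a Banach--Steinhaus argument: the linear functionals $T_\eps:\varphi\mapsto \int_\Omega v^\eps\varphi\dx$ on $\rL^2(\Omega)$ satisfy $\|T_\eps\|_{(\rL^2)^*}=\|v^\eps\|_{\rL^2(\Omega)}$, and by the previous step they are pointwise convergent, hence pointwise bounded, on the dense subspace $\mathrm C_c^\infty(\Omega)$; uniform boundedness then extends to all of $\rL^2(\Omega)$. Alternatively, one can approximate $v^0$ in $\rL^2(\Omega\times\rY)$ by admissible test functions $\psi_n\in\mathcal D(\Omega;\mathrm C_{\per}(\rY))$, and exploit the definition of two-scale convergence together with the standard fact that $\int_\Omega \psi_n(x,x/\eps)^2\dx \to \int_\Omega\int_\rY \psi_n(x,y)^2\dy\dx$ to bound $\|v^\eps\|_{\rL^2(\Omega)}$ by a constant independent of $\eps$ via Cauchy--Schwarz.

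With the uniform bound in hand, the extension to every $\varphi\in\rL^2(\Omega)$ is a routine density argument. Given $\varphi_n\in\mathrm C_c^\infty(\Omega)$ with $\varphi_n\to\varphi$ in $\rL^2$, I would estimate
\[
\Bigl|\int_\Omega(v^\eps-\langle v\rangle)\varphi\dx\Bigr| \leq \Bigl|\int_\Omega(v^\eps-\langle v\rangle)\varphi_n\dx\Bigr| + \bigl(\|v^\eps\|_{\rL^2}+\|\langle v\rangle\|_{\rL^2}\bigr)\|\varphi-\varphi_n\|_{\rL^2},
\]
send $\eps\to 0$ to kill the first term for each fixed $n$ by the smooth case, and then $n\to\infty$ to make the second term arbitrarily small. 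The only real obstacle in the whole proof is the uniform $\rL^2$-bound: once that is secured, everything else is linearity and density. The smooth-test-function step itself is immediate from the definition, and the final conclusion is just the standard characterization of weak $\rL^2$-convergence.
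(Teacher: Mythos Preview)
The paper does not supply its own proof of this proposition; it is quoted as a known result from \cite{Nguetseng1989, Allaire_1992}. Your overall plan---test two-scale convergence against $y$-independent smooth functions and then extend to all of $\rL^2(\Omega)$ by density---is the standard one and is correct once a uniform bound $\sup_\eps\|v^\eps\|_{\rL^2(\Omega)}<\infty$ is available.

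Your Banach--Steinhaus route to that bound, however, is flawed: the uniform boundedness principle requires pointwise boundedness on the \emph{whole} Banach space, not merely on a dense subspace. Concretely, take $\Omega=(0,1)$ and $v^\eps=\eps^{-3/4}\mathds 1_{[1/2,\,1/2+\eps]}$. Then $\int_\Omega v^\eps(x)\psi(x,x/\eps)\dx\to 0$ for every admissible test function $\psi$, so $v^\eps$ two-scale converges to $0$ in the sense of Definition~\ref{defn:two-scale} and in particular $\int_\Omega v^\eps\varphi\dx\to 0$ for every $\varphi\in\mathrm C_c^\infty(\Omega)$; yet $\|v^\eps\|_{\rL^2}=\eps^{-1/4}\to\infty$ and $v^\eps$ does \emph{not} converge weakly in $\rL^2(\Omega)$ (test against $\varphi(x)=|x-\tfrac12|^{-1/4}\in\rL^2(\Omega)$, for which $\int_\Omega v^\eps\varphi\dx\equiv\tfrac{4}{3}$). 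Your alternative route, approximating $v^0$ by admissible $\psi_n$, is the standard argument for the \emph{lower} semicontinuity $\|v^0\|_{\rL^2(\Omega\times\rY)}\le\liminf_\eps\|v^\eps\|_{\rL^2(\Omega)}$, but it does not produce an upper bound on $\|v^\eps\|_{\rL^2}$ either. The resolution is that in \cite{Nguetseng1989, Allaire_1992} the uniform $\rL^2$-bound on $v^\eps$ is a \emph{hypothesis} (two-scale limits are extracted from bounded sequences, as in Theorem~\ref{thm:two-scale-compact}), not something to be derived a posteriori. With that bound taken as given, your density argument completes the proof.
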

Note that the notion of two-scale convergence is a weak-type convergence as it is given in terms of test functions (see Definition \ref{defn:two-scale}). 

For any given $g\in \mathrm L^\infty(Y)$, the linear operator
\[
\mathcal{L}_g\, v := gv - \< g v \> \quad \forall v\in\rL^2_{\per}(\rY)
\]
is bounded in $\Lper(\rY)$ as
\begin{align*}
\left\Vert \mathcal{L}_g h \right\Vert^2_{\rL^2_{\per}(\rY)}
= \int_\rY \left\vert g(y) h(y) - \langle gh \rangle \right\vert^2\, {\rm d}y
= \int_\rY \left\vert g(y) h(y) \right\vert^2\, {\rm d}y - \langle g h \rangle^2
\end{align*}
and we have by Cauchy-Schwarz
\begin{align*}
\left\vert \langle g h \rangle \right\vert = \left\vert \int_\rY g(y) h(y)\, {\rm d}y \right\vert \le \left( \int_\rY \left\vert g(y) h(y) \right\vert^2\, {\rm d}y\right)^\frac12.
\end{align*}
As a consequence, $\cL_g: \Lper(\rY) \mapsto \Lper(\rY)$ is the infinitesimal generator of a uniformly continuous semigroup given by
$$
e^{t\cL_g} = \sum_{n=0}^\infty \frac{t^n}{n!} \cL_g^n.
$$

We next present the main homogenization result for the evolution \eqref{eq:exm:model}.\begin{thm}
\label{thm:exm:ode}
Let $u^\eps(t,x)$ be the solution to the evolution problem \eqref{eq:exm:model}. Then
\[
u^\eps \weak u_{\rm hom} \quad \mbox{ weakly in }\rL^2((0,T)\times\Omega)
\]
and $u_{\rm hom}(t,x)$ solves the following integro-differential equation
\begin{equation}\label{eq:evol-u-hom}
\left\{
\begin{aligned}
\partial_t u_{\rm hom}(t,x) + \langle\sigma\rangle(x) u_{\rm hom}(t,x) - \int_0^t \mathcal{K}(t-s,x) u_{\rm hom}(s,x) \, {\rm d}s & = \mathcal{S}(t,x)
\\
u_{\rm hom}(0,x) &= \langle u_{\rm in} \rangle(x)
\end{aligned}
\right.
\end{equation}
where the memory kernel is given by
\be
\label{eq:memoryKernel}
\mathcal{K}(\tau,x) \coloneqq \int_\rY \sigma(x,y) e^{-\tau\mathcal{L}_\sigma}\mathcal{L}_1 \sigma(x,y)\, {\rm d}y
\ee
and the source term is given by
\be
\label{eq:homogRhs}
\mathcal{S}(t,x)
\coloneqq
\langle f \rangle(t,x)
- \int_0^t \int_\rY \sigma(x,y) e^{-(t-s)\mathcal{L}_\sigma} \mathcal{L}_1 f (s,x,y) \, {\rm d}y \, {\rm d}s 
- \int_\rY \sigma(x,y) e^{-t\mathcal{L}_\sigma} \mathcal{L}_1 u_{\rm in}(x,y)\, {\rm d}y.
\ee
\end{thm}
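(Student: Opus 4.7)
The plan is to combine two-scale convergence with an explicit semigroup representation of the fluctuation around the mean. \textbf{First, extraction of a two-scale limit.} Since $\sigma^\eps \geq \sigma_{\min} > 0$, the Duhamel representation for the scalar-in-$x$ ODE \eqref{eq:exm:model} immediately gives a uniform $\rL^\infty((0,T)\times\Omega)$ bound on $u^\eps$. By Theorem \ref{thm:two-scale-compact}, up to a subsequence, $u^\eps$ two-scale converges to some $u^0(t,x,y) \in \rL^2((0,T)\times\Omega\times\rY)$. Testing \eqref{eq:exm:model} against $\psi(t,x)\phi(x/\eps)$ with $\psi \in \mathrm{C}^1_c([0,T)\times\Omega)$ and $\phi \in \mathrm{C}_{\per}(\rY)$, integrating by parts in $t$, and passing to the two-scale limit, using that $\sigma^\eps$, $f^\eps$ and $u_{\rm in}^\eps$ are admissible two-scale test functions by the continuity hypotheses, yields the cell ODE
\begin{equation*}
\partial_t u^0(t,x,y) + \sigma(x,y)\, u^0(t,x,y) = f(t,x,y), \qquad u^0(0,x,y) = u_{\rm in}(x,y),
\end{equation*}
pointwise in $(x,y)$. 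Uniqueness of this ODE implies that every two-scale cluster point coincides, so the full sequence converges.

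\textbf{Second, mean-fluctuation decomposition.} Set $u_{\rm hom}(t,x) := \langle u^0\rangle(t,x)$ and $w(t,x,y) := u^0(t,x,y) - u_{\rm hom}(t,x)$, so that $\langle w\rangle = 0$. Averaging the cell equation in $y$ yields
\begin{equation*}
\partial_t u_{\rm hom} + \langle\sigma\rangle u_{\rm hom} + \langle\sigma w\rangle = \langle f\rangle,
\end{equation*}
while subtracting this identity from the cell equation gives the fluctuation equation
\begin{equation*}
\partial_t w + \cL_\sigma w = \cL_1 f - (\cL_1\sigma)\, u_{\rm hom}(t,x), \qquad w(0,x,y) = \cL_1 u_{\rm in}(x,y),
\end{equation*}
which one checks lives in the zero-mean subspace of $\Lper(\rY)$ on which $\cL_\sigma$ generates the uniformly continuous semigroup $e^{-t\cL_\sigma}$ introduced just before the theorem.

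\textbf{Third, identification of the kernel and source.} Solving the fluctuation equation by Duhamel,
\begin{equation*}
w(t,x,y) = e^{-t\cL_\sigma}\cL_1 u_{\rm in}(x,y) - \int_0^t u_{\rm hom}(s,x)\, e^{-(t-s)\cL_\sigma}\cL_1\sigma(x,y)\, ds + \int_0^t e^{-(t-s)\cL_\sigma}\cL_1 f(s,x,y)\, ds,
\end{equation*}
then multiplying by $\sigma(x,y)$ and integrating in $y$, the coefficient of $u_{\rm hom}(s,x)$ in the middle term is exactly $\cK(t-s,x)$ from \eqref{eq:memoryKernel}, while the remaining two contributions, moved to the right-hand side alongside $\langle f\rangle$, assemble into $\cS(t,x)$ from \eqref{eq:homogRhs}. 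This yields \eqref{eq:evol-u-hom}, and the weak $\rL^2$ convergence $u^\eps \weak u_{\rm hom}$ follows from Proposition \ref{prop:weaklimit2scale}. The main technical hurdle is the passage to the two-scale limit in the product $\sigma^\eps u^\eps$ of the first step, where $u^\eps$ enjoys only weak-type compactness; this succeeds precisely because $\sigma(x,y)$ is continuous in the fast variable so that $\sigma^\eps$ acts as an admissible two-scale test function, and a parallel comment applies to the initial data via $u_{\rm in}^\eps$.
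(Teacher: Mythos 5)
Your proposal is correct and follows the paper's overall architecture (uniform bounds, two-scale compactness, cell ODE for $u^0$, mean-plus-fluctuation decomposition, Duhamel for the fluctuation, substitution back). The only genuine difference is in the identification of the cell ODE: you test the weak form of \eqref{eq:exm:model} against $\psi(t,x)\phi(x/\eps)$ and pass to the two-scale limit in the products $\sigma^\eps u^\eps$, $f^\eps\psi\phi^\eps$, $u_{\rm in}^\eps\psi(0,\cdot)\phi^\eps$, whereas the paper exploits the fact that the scalar ODE has an explicit solution $u^\eps(t,x)=u_{\rm in}(x,x/\eps)e^{-\sigma^\eps(x)t}+\int_0^t e^{-\sigma^\eps(x)(t-s)}f(s,x,x/\eps)\,\dd s$, which is manifestly of the form $F(t,x,x/\eps)$ with $F\in \rL^2((0,T)\times\Omega;C_{\per}(\rY))$ and hence has two-scale limit $F$ directly. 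Your weak-formulation route is the more portable argument (it would survive if the ODE were not explicitly solvable), while the paper's explicit-formula route is shorter here and sidesteps the need to justify the passage to the limit in the product $\sigma^\eps u^\eps$, which you correctly flag as the delicate step and correctly resolve by observing that $\sigma(x,y)\psi(t,x)\phi(y)$ is an admissible two-scale test function under the assumption $\sigma\in\rL^\infty(\Omega;C_{\per}(\rY))$.

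One small inaccuracy: you claim a uniform $\rL^\infty((0,T)\times\Omega)$ bound on $u^\eps$. With the paper's hypothesis $u_{\rm in}\in\rL^2(\Omega;C_{\per}(\rY))$ (only $\rL^2$ in $x$), the initial data $u_{\rm in}(x,x/\eps)$ is in general not essentially bounded in $x$, and the Duhamel formula gives only $\Vert u^\eps\Vert_{\rL^\infty((0,T);\rL^2(\Omega))}\le C$, as stated in the paper. This is all that is needed, since it implies the uniform $\rL^2((0,T)\times\Omega)$ bound required for Theorem \ref{thm:two-scale-compact}, so the conclusion stands; just be careful about the stated norm.
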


\begin{rem}
The expression \eqref{eq:memoryKernel} should be understood as the action of the semigroup $e^{-t\mathcal{L}_\sigma}$ on the quantity $\sigma(y) - \langle{\sigma}\rangle$ followed by the multiplication of the periodic function $\sigma$. The thus obtained result is then integrated over the periodic variable $y$ in the unit cell $\mathrm Y$. Similar interpretation can be given for the terms involved in the expression \eqref{eq:homogRhs} for the source $\mathcal{S}(t,x)$.
\end{rem}

\begin{proof}[Proof of Theorem \ref{thm:exm:ode}]
To employ compactness results of the two-scale convergence theory, we first derive uniform (w.r.t $\eps$) estimates on the solution $u^\eps(t,x)$.
Owing to the simplicity of the evolution equation \eqref{eq:exm:model}, we can explicitly solve it and get
\begin{align*}
u^\eps(t,x) = u_{\rm in}\left(x,\frac{x}{\eps}\right) e^{-\sigma^\eps(x)t} + \int_0^t e^{-\sigma^\eps(x)(t-s)} f\left(s,x,\frac{x}{\eps}\right)\, {\rm d}s.
\end{align*}
The regularity properties of the initial condition $u_{\rm in}$ and of the source term $f$, together with the fact that $\sigma^\eps \geq 0$, imply that
$$
\Vert u^\eps \Vert_{\rL^\infty((0,T); \rL^2( \Omega))} \leq C <\infty,\quad
$$
where the constant is $C$ independent of the parameter $\eps$. Therefore, by Theorem \ref{thm:two-scale-compact}, there exists a subsequence $u^\eps$ which two-scale converges to a function $u^0 \in \rL^2((0,T)\times\Omega\times \rY)$.\\
Next, 
Passing to the limit as $\eps\to0$ in the sense of two-scale in the above expression, we obtain
\begin{align*}
u^0(t,x,y) = u_{\rm in}(x,y) e^{-\sigma(x,y)t} + \int_0^t e^{-\sigma(x,y)(t-s)} f\left(s,x,y\right)\, {\rm d}s
\end{align*}
which essentially says that the limit $u^0$ solves the following two-scale evolution:
\begin{equation}\label{eq:exm:2scale-model}
\left\{
\begin{aligned}
\partial_t u^0(t,x,y) + \sigma(x,y) u^0(t,x,y) &= f(t,x,y) \qquad \mbox{ for }(t,x,y)\in(0,T)\times\Omega\times\mathrm Y,
\\[0.2 cm]
u^0(0,x,y) &= u_{\text{in}}\left(x,y\right)\qquad \mbox{ for }(x,y)\in\Omega\times\mathrm Y.
\end{aligned}
\right.
\end{equation}
By Proposition \ref{prop:weaklimit2scale}, the sequence $u^\eps$ converges weakly in $\rL^2((0,T)\times\Omega)$  to
\[
u_{\rm hom}(t,x) \coloneqq \langle u^0\rangle (t,x)
\]
and we can then decompose the two-scale limit into a homogeneous part and a remainder which is of zero mean over the periodic cell, i.e.,
\begin{align}\label{eq:exm:2scale-decomposition}
u^0(t,x,y) = u_{\rm hom}(t,x) + r(t,x,y)
\qquad 
\mbox{ where }
\quad 
\langle r \rangle = 0.
\end{align}
Substituting \eqref{eq:exm:2scale-decomposition} into the evolution equation for the two-scale limit \eqref{eq:exm:2scale-model}, we obtain
\begin{equation}
\label{eq:exm:substiute}
\partial_t u_{\rm hom} + \sigma(x,y) u_{\rm hom}
+
\partial_t r + \sigma(x,y) r
=
f(t,x,y).
\end{equation}
Integrating the above equation over the periodicity cell $\rY$ yields
\begin{equation}
\label{eq:exm:substiute-average}
\partial_t u_{\rm hom} + \langle{\sigma}\rangle(x) u_{\rm hom}
=
\langle{f}\rangle(t,x) - \< \sigma(x,\cdot) r(t,x,\cdot)\>
\end{equation}
as the reminder $r$ is of zero average in the $y$ variable. Using \eqref{eq:exm:substiute-average} in \eqref{eq:exm:substiute} we obtain an equation for the remainder term:
\begin{align}\label{eq:r}
\partial_t r + \sigma(x,y) r - \int_\rY \sigma(x,y) r(t,x,y)\, {\rm d}y = \Big( \langle{\sigma}\rangle(x) - \sigma(x,y) \Big) u_{\rm hom} + f(t,x,y) - \langle{f} \rangle (t,x).
\end{align}
As a result, equations \eqref{eq:exm:substiute-average} and \eqref{eq:r} yield the following coupled system for $u_{\rm hom}(t,x)$ and $r(t,x,y)$:
\begin{equation}
\label{eq:exm:coupled-system}
\left\{
\begin{aligned}
\partial_t u_{\rm hom} + \langle{\sigma}\rangle (x) u_{\rm hom}
& =
\langle{f}\rangle(t,x) - \< \sigma(x,\cdot) r(t,x,\cdot)\>
\\
\partial_t r + \mathcal{L}_\sigma r & = - u_{\rm hom} \mathcal{L}_1\sigma + \mathcal{L}_1 f
\\
u_{\rm hom}(0,x) &= \langle u_{\rm in}(x) \rangle
\\
r(0,x,y) &= \mathcal{L}_1 u_{\rm in}.
\end{aligned}\right.
\end{equation}
In the above couple system, we can solve for the remainder term $r(t,x,y)$ in terms of the homogenized limit $u_{\rm hom}$ as follows:
\begin{align}
r(t,x,y) = e^{-t\mathcal{L}_\sigma} \mathcal{L}_1 u_{\rm in}(x,y) 
+ \int_0^t e^{-(t-s)\mathcal{L}_\sigma} \mathcal{L}_1 f(s,x,y)\, {\rm d}s
- \int_0^t e^{-(t-s)\mathcal{L}_\sigma} \mathcal{L}_1 \sigma(x,y) u_{\rm hom}(s,x) \, {\rm d}s.
\end{align}
Substituting this expression for the remainder in the evolution for $u_{\rm hom}(t,x)$ in \eqref{eq:exm:coupled-system} yields
\begin{align*}
\partial_t u_{\rm hom} + \langle{\sigma}\rangle(x) u_{\rm hom}
&=
\langle{f}\rangle(t,x) - \int_\rY \sigma(x,y) v(t,x,y)\, {\rm d}y - \int_\rY \sigma(x,y) w(t,x,y)\, {\rm d}y \\
&= \langle{f}\rangle(t,x) 
+ \int_0^t \int_\rY \sigma(x,y) e^{-(t-s)\mathcal{L}_\sigma} \mathcal{L}_1 \sigma(x,y) u_{\rm hom}(s,x) \, {\rm d}y \, {\rm d}s 
\\
&\quad- \int_0^t \int_\rY \sigma(x,y) e^{-(t-s)\mathcal{L}_\sigma} \mathcal{L}_1 f (s,x,y) \, {\rm d}y \, {\rm d}s 
- \int_\rY \sigma(x,y) e^{-t\mathcal{L}_\sigma} \mathcal{L}_1 u_{\rm in}(x,y)\, {\rm d}y,
\end{align*}
from which the evolution equation \eqref{eq:evol-u-hom} for $u_{\rm hom}(t,x)$ easily follows.
\end{proof}

\begin{rem}
The result in Theorem \ref{thm:exm:ode} can easily be extended to a system of ordinary differential equations of the form 
\begin{equation}\label{eq:exm:modelSystem}
\left\{
\begin{aligned}
\partial_t \bu^\eps(t,x) + {\bf \Sigma}^\eps(x) \bu^\eps(t,x) & = {\bf f}^\eps (t,x)
\\[0.2 cm]
\bu^\eps(0,x) & = \bu_{\rm in}\left(x,\frac{x}{\eps}\right)
\end{aligned}\right.
\end{equation}
where the unknown $\bu^\eps \in \bR^n$. In this case,
$$
{\bf\Sigma}^\eps(x) := {\bf\Sigma}\left(x,\frac{x}{\eps}\right) = \( \Sigma_{i,j} \left(x,\frac{x}{\eps}\right)  \)_{1\leq i,j\leq n}, \quad \Sigma_{i,j} \in \mathrm L^\infty(\Omega;\mathrm C_{\per}(\rY))
$$
and we assume that ${\bf\Sigma}^\eps$ is diagonalizable in the sense that there exists ${\bf P} \in \bR^{n\times n}$ invertible and ${\bf D}^\eps\in \bR^{n\times n}$ diagonal such that ${\bf\Sigma}^\eps = {\bf P D^\eps P}^{-1}$. The right hand side is now
$$
{\bf f}^\eps(t,x) := {\bf f}\left(t,x,\frac{x}{\eps}\right) = \( f_i  \left(t,x,\frac{x}{\eps}\right) \)_{1\leq i\leq n}, \quad f^\eps_{i} \in \mathrm L^\infty((0,T)\times \Omega;\mathrm C_{\per}(\rY)).
$$
Since $\bw^\eps = {\bf P}^{-1}\bu^\eps = (w^\eps_i)_{1\leq i \leq n}$ satisfy for $1\leq i \leq n$,
\begin{equation}
\label{eq:exm:modelsystem}
\left\{
\begin{aligned}
\partial_t w_i^\eps(t,x) + D_{i,i}\(x\frac{x}{\eps}\) w_i^\eps(t,x) & = \({\bf P}^{-1}{\bf f}\)_i \(t,x,\frac{x}{\eps}\)
\\[0.2 cm]
w_i^\eps(0,x) & =  \({\bf P}^{-1}{\bf u}_{\rm in}\)_i \left(x,\frac{x}{\eps}\right),
\end{aligned}\right.
\end{equation}
we can now apply Theorem $\ref{thm:exm:ode}$ to problem \eqref{eq:exm:modelsystem} and derive the homogenized equation and corresponding solution $w_{i, \hom}$. We derive the final result by undoing the change of variables, namely by computing $\bu_{\hom} = {\bf P} \bw_{\hom}$.

Note that our extension required that $\bf \Sigma$ be diagonalizable in $\bR$. At the end of the paper (see Appendix \ref{sec:complex}), we present an extension of our ODE setting \eqref{eq:exm:model} to the case where $\sigma$ and $f$ lie in the complex plane which allows us to work with any matrix $\bf\Sigma$.
\end{rem}

We next show that expression \eqref{eq:memoryKernel} of our memory kernel is consistent with the one given by Tartar in \eqref{eq:equiv:Tartar-Laplace}. For this, note that when $\sigma(x,y)=\sigma(y)$, i.e., when the coefficient is purely periodic, the memory kernel $\mathcal{K}$ takes the form
\begin{align}\label{eq:equiv:Our-expression}
\mathcal{K}(\tau) := \int_\rY \sigma(y) e^{-\tau\mathcal{L}_\sigma} \left( \sigma - \langle{\sigma}\rangle \right)(y)\, {\rm d}y.
\end{align}
The following result shows that, in this setting, we have $\widehat{\cK} = \widehat{\cM}$, which means that our expression coincides with that of Tartar.

\begin{thm}
For any $p>0$,
$$
\widehat{\mathcal{K}}(p) = \widehat{\mathcal{M}}(p).
$$
\end{thm}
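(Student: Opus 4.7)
The plan is to compute $\widehat{\cK}(p)$ directly from its definition by swapping the Laplace integral in $\tau$ with the spatial average over $\rY$, and identifying the resulting operator as the resolvent of $\cL_\sigma$. Precisely, Fubini and the identity $\int_0^\infty e^{-p\tau} e^{-\tau\cL_\sigma}\,\dd \tau = (p+\cL_\sigma)^{-1}$, which is legitimate because $\cL_\sigma$ is a bounded operator on $\Lper(\rY)$ (as established earlier in the section) and $p>0$, yield
$$
\widehat{\cK}(p) = \int_\rY \sigma(y)\,\bigl[(p+\cL_\sigma)^{-1}(\sigma-\langle\sigma\rangle)\bigr](y)\,\dd y.
$$
The whole problem is thereby reduced to inverting $p+\cL_\sigma$ explicitly on the zero-mean function $g:=\sigma-\langle\sigma\rangle$.

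The central step is this inversion. Although $\cL_\sigma$ is nonlocal, its nonlocal part $-\langle \sigma\,\cdot\,\rangle$ is rank one, so solving $(p+\cL_\sigma)h = g$ is essentially one-dimensional. Rewriting the equation as $(p+\sigma)h = g + \langle \sigma h\rangle$, I would set $c:=\langle \sigma h\rangle$, divide by $p+\sigma$, and average against $\sigma$ to obtain a closed scalar equation for $c$. Using the elementary identity $\sigma/(p+\sigma) = 1 - p/(p+\sigma)$ together with the defining relation $\cB(p) = \langle 1/(p+\sigma)\rangle^{-1}$, one has $\langle \sigma/(p+\sigma)\rangle = 1 - p/\cB(p)$, and the scalar equation solves cleanly to
$$
c \;=\; \frac{\cB(p)}{p}\,\Bigl\langle \frac{\sigma g}{p+\sigma}\Bigr\rangle.
$$

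It remains to observe that $\widehat{\cK}(p) = \langle \sigma h\rangle = c$ and to specialize to $g=\sigma-\langle\sigma\rangle$. A short algebraic simplification, using once more $\sigma/(p+\sigma) = 1 - p/(p+\sigma)$ to reduce $\langle \sigma^2/(p+\sigma)\rangle$ to $\langle\sigma\rangle - p + p^2/\cB(p)$, gives
$$
\Bigl\langle \frac{\sigma(\sigma-\langle\sigma\rangle)}{p+\sigma}\Bigr\rangle \;=\; \frac{p}{\cB(p)}\bigl(p + \langle\sigma\rangle - \cB(p)\bigr),
$$
so that $\widehat{\cK}(p) = p + \langle\sigma\rangle - \cB(p) = \widehat{\cM}(p)$, which is the claim. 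The only step that requires any thought is the resolvent inversion; once the rank-one structure of $\cL_\sigma$ is exploited, the remainder is routine bookkeeping with averages of rational functions of $\sigma$. I do not anticipate a genuine obstacle, since positivity of $\sigma$ and boundedness of $\cL_\sigma$ make both the semigroup estimate and the resolvent identity straightforward.
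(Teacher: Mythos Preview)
Your proposal is correct and follows essentially the same route as the paper: take the Laplace transform, convert the semigroup integral to the resolvent $(p+\cL_\sigma)^{-1}$, exploit the rank-one structure of $\cL_\sigma$ to reduce to a scalar equation, and finish using the identity $\langle \sigma/(p+\sigma)\rangle = 1 - p/\cB(p)$. The only organizational difference is that the paper first writes the resolvent solution $h$ explicitly (imposing $\langle h\rangle=0$ to fix the free constant) and then computes $\langle\sigma h\rangle$ by replacing $\sigma$ with $p+\sigma$, whereas you set $c=\langle\sigma h\rangle$ from the outset and solve a closed scalar equation for it directly; your shortcut is slightly cleaner since $c$ is exactly $\widehat{\cK}(p)$, but the mathematical content is the same.
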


\begin{proof}
The Laplace transform of our expression \eqref{eq:equiv:Our-expression} for the memory kernel is
\begin{align*}
\widehat{\mathcal{K}}(p) = \int_0^\infty e^{-pt} \mathcal{K}(t) \, {\rm d}t = \int_0^\infty \int_\rY \sigma(y) e^{-pt} e^{-t\mathcal{L}_\sigma} \left( \sigma - \langle{\sigma}\rangle \right)(y)\, {\rm d}y \, {\rm d}t
\end{align*}
and since the Laplace transform of a semigroup yields the corresponding resolvent, we have
\begin{align}\label{eq:equiv:Our-expression-Laplace}
\widehat{\mathcal{K}}(p) = \int_\rY \sigma(y) \left[ p + \mathcal{L}_\sigma \right]^{-1} \left( \sigma - \langle{\sigma}\rangle \right)(y)\, {\rm d}y.
\end{align}
%
We now prove that
\begin{equation}\label{eq:laplaceEq}
\left[ p + \mathcal{L}_\sigma \right]^{-1} \left( \sigma - \langle{\sigma}\rangle \right)(y)
=
\frac{\sigma(y) - \langle{\sigma}\rangle}{p + \sigma} - \frac{\mathcal{B}(p)}{p + \sigma} \int_\rY \frac{\sigma(y) - \langle{\sigma}\rangle}{p + \sigma(y)}\, {\rm d}y. 
\end{equation}
For this, we consider the equation
\begin{align*}
\left[ p + \mathcal{L}_\sigma \right] g(y) = f(y),\quad y\in \rY
\end{align*}
for a given $p>0$ and a given measurable function $f$ of zero mean.
%
%
Averaging the equation in the $y$ variable yields that it is necessary to have the solution $g(y)$ to be of zero average as well. Hence, for zero average functions, we observe that
\begin{align*}
\left[ p + \mathcal{L}_\sigma \right] g(y) = p + \sigma(y) g(y) - \int_\rY \sigma(y) g(y)\, {\rm d}y = p + \sigma(y) g(y) - \int_\rY \left( \sigma(y) + p \right) g(y)\, {\rm d}y = \mathcal{L}_{p+\sigma} g(y).
\end{align*}
A simple inspection reveals that a general solution to
\[
\mathcal{L}_{p+\sigma} g(y) = f(y)
\]
is given by
\begin{align*}
g(y) = \frac{f(y)}{p+\sigma(y)} + \frac{C}{p+\sigma(y)}
\end{align*}
where the constant $C$ needs to be chosen such that $g(y)$ is of zero average. Hence
$$
C = - \cB(p)  \int_\rY \frac{f(y)}{p+\sigma(y)} \, {\rm d}y
\qquad \mbox{ with }\quad
\mathcal{B}(p) := \left( \int_\rY \frac{{\rm d}y}{p + \sigma(y)}\right)^{-1}
$$
and we have found the explicit expression for the solution $g(y)$.

As a result, by taking $f(y) = \sigma - \langle{\sigma}\rangle$, we obtain the solution to be 
\begin{align*}
\left[ p + \mathcal{L}_\sigma \right]^{-1} \left( \sigma - \langle{\sigma}\rangle \right)(y) = g(y) = \frac{\sigma(y) - \langle{\sigma}\rangle}{p + \sigma} - \frac{\mathcal{B}(p)}{p + \sigma} \int_\rY \frac{\sigma(y) - \langle{\sigma}\rangle}{p + \sigma(y)}\, {\rm d}y
\end{align*}
as anticipated in \eqref{eq:laplaceEq}.
Using the above observation, the expression \eqref{eq:equiv:Our-expression-Laplace} yields
\begin{equation}\label{eq:equiv:laplace-calcul-1}
\begin{aligned}
\widehat{\mathcal{K}}(p) & = \int_\rY \sigma(y) \left( \frac{\sigma(y) - \langle{\sigma}\rangle}{p + \sigma} - \frac{\mathcal{B}(p)}{p + \sigma} \int_\rY \frac{\sigma(y) - \langle{\sigma}\rangle}{p + \sigma(y)}\, {\rm d}y \right)\, {\rm d}y
\\
& = \int_\rY \left( \sigma(y) + p \right) \left( \frac{\sigma(y) - \langle{\sigma}\rangle}{p + \sigma} - \frac{\mathcal{B}(p)}{p + \sigma} \int_\rY \frac{\sigma(y) - \langle{\sigma}\rangle}{p + \sigma(y)}\, {\rm d}y \right)\, {\rm d}y
\end{aligned}
\end{equation}
as the function $g(y)$ is of zero mean. Observe that the following quotient is of zero mean
\[
\frac{p+\sigma(y) - \mathcal{B}(p)}{p + \sigma(y)},
\]
which implies the following identity
\begin{align}
\int_\rY \frac{\sigma(y)}{p + \sigma(y)}\, {\rm d}y = 1 - \frac{p}{\mathcal{B}(p)}.
\end{align}
Using it in \eqref{eq:equiv:laplace-calcul-1} yields
\begin{align*}
\widehat{\mathcal{K}}(p) = \langle{\sigma}\rangle - \mathcal{B}(p) \int_\rY \frac{\sigma(y)}{p + \sigma(y)}\, {\rm d}y = p + \langle{\sigma}\rangle - \mathcal{B}(p) = \widehat{\mathcal{M}}(p),
\end{align*}
thus proving the equivalence.
\end{proof}

\section{Homogenization in energy for the linear Boltzmann equation}
\label{sec:hom_linear_Boltzmann}

We consider the homogenization of the multi-scale linear Boltzmann model \eqref{eq:delta-evol-neutron-flux}, where the optical parameters $\sigma^\varepsilon$ and $\kappa^\varepsilon$ present oscillations in the energy variable that are modeled in accordance to experimental observations (see the Introduction). The first step is the proof of some a priori bounds, which will then be used to prove our main homogenization result. The proofs for the a priori bounds involve classical techniques and similar results can be found in \cite[Chapter 21]{DLvol6}.\\
In this section, $\langle \cdot \rangle$ denotes integral over the interval $(0,1)$, i.e. averaging over the periodic cell in the energy variable:
\[
\langle v \rangle := \int_0^1 v(y)\, {\rm d}y \qquad \quad \mbox{ for all }v\in\mathrm L^1(0,1).
\]

\subsection{A priori bounds of the solution}
In the following, we will sometimes use the shorthand notation
$$
\bV = \S^{d-1}\times \Eint
$$
We first prove a positivity property of the Boltzmann operator
\begin{align*}
\cQ^\varepsilon f := \sigma^\varepsilon \, f - \int_{\bV} \kappa^\varepsilon(x, \omega\cdot \omega',E,E') f(x,\omega',E')\, {\rm d}\omega'\, {\rm d}E',\quad \forall f \in \rL^2(\Omega\times\bV).
\end{align*}

\begin{prop}
\label{prop:positivityQ}
If $(\sigma^\varepsilon, \kappa^\varepsilon)$ satisfy assumptions \eqref{eq:multiscaleOP} and \eqref{eq:hypOP}, then for all $\varepsilon>0$ and all $f\in \rL^2(\Omega\times\bV)$,
\begin{equation}
\left( \cQ^\varepsilon f, f \right)_{\rL^2(\Omega\times\bV)} \geq \alpha \Vert f \Vert^2_{ \rL^2(\Omega\times\bV)}.
\end{equation}
\end{prop}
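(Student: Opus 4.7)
The plan is to separate $(\cQ^\varepsilon f, f)_{\rL^2(\Omega\times\bV)}$ into its absorption and scattering contributions and then control the bilinear scattering part by a quadratic form in $f^2$. Writing
\begin{align*}
(\cQ^\varepsilon f, f)_{\rL^2(\Omega\times\bV)} = \int_{\Omega\times\bV} \sigma^\varepsilon(x,\omega,E)\, f(x,\omega,E)^2 \, dx\, d\omega\, dE - S,
\end{align*}
with
\begin{align*}
S := \int_{\Omega\times\bV\times\bV} \kappa^\varepsilon(x,\omega\cdot\omega',E,E')\, f(x,\omega',E')\, f(x,\omega,E) \, dx\, d\omega\, dE\, d\omega'\, dE',
\end{align*}
the first step is to bound $|S|$ using the elementary inequality $|ab| \leq \tfrac{1}{2}(a^2+b^2)$ together with $\kappa^\varepsilon \geq 0$. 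This yields $|S| \leq \tfrac{1}{2}(S_1 + S_2)$, where $S_1$ collects the contribution of $f(x,\omega,E)^2$ and $S_2$ that of $f(x,\omega',E')^2$.

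The key step is to identify $S_1$ and $S_2$ with the averaged cross-sections $\bar\kappa^\varepsilon$ and $\tilde\kappa^\varepsilon$ from \eqref{eq:kappa_tilde}. By Fubini, $S_1$ is immediately the integral of $\bar\kappa^\varepsilon(x,\omega,E)\, f(x,\omega,E)^2$ over $\Omega\times\bV$. For $S_2$, I would relabel the dummy variables $(\omega,E) \leftrightarrow (\omega',E')$; since $\omega\cdot\omega' = \omega'\cdot\omega$, the kernel becomes $\kappa^\varepsilon(x,\omega\cdot\omega',E',E)$ and its integral in $(\omega',E')$ is exactly $\tilde\kappa^\varepsilon(x,\omega,E)$. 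Combining these two identifications gives
\begin{align*}
(\cQ^\varepsilon f, f)_{\rL^2(\Omega\times\bV)} \geq \int_{\Omega\times\bV} \Bigl( \sigma^\varepsilon - \tfrac{1}{2}\bar\kappa^\varepsilon - \tfrac{1}{2}\tilde\kappa^\varepsilon \Bigr)(x,\omega,E) \, f(x,\omega,E)^2 \, dx\, d\omega\, dE.
\end{align*}

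The conclusion then follows by averaging the two inequalities of \eqref{eq:hypOP}, which yields $\sigma^\varepsilon - \tfrac{1}{2}(\bar\kappa^\varepsilon + \tilde\kappa^\varepsilon) \geq \alpha$ pointwise, and pulling this uniform lower bound out of the integral. I do not anticipate any genuine obstacle here: this is a classical Boltzmann-type coercivity computation. The only mild subtlety is the appearance of two distinct averaged kernels $\bar\kappa^\varepsilon$ and $\tilde\kappa^\varepsilon$, and this is precisely the reason why \eqref{eq:hypOP} is stated as a pair of inequalities rather than just one: they jointly absorb the two asymmetric contributions generated by the dummy-variable swap that symmetrizes $S$.
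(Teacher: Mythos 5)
Your proof is correct and follows essentially the same path as the paper's: symmetrize the bilinear scattering term into the two contributions governed by $\bar\kappa^\varepsilon$ and $\tilde\kappa^\varepsilon$, then invoke the averaged form of the two inequalities in \eqref{eq:hypOP}. The only cosmetic difference is that the paper first applies Cauchy--Schwarz with weight $\kappa^\varepsilon$ and then Young's inequality to the resulting product of square roots, whereas you apply the AM--GM inequality $|ab|\le\tfrac12(a^2+b^2)$ directly inside the double integral; both routes land on the identical pointwise bound $\sigma^\varepsilon-\tfrac12(\bar\kappa^\varepsilon+\tilde\kappa^\varepsilon)\ge\alpha$.
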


\begin{proof}
It follows from the Cauchy-Schwarz inequality and the definition of $\bar\kappa^\varepsilon$ and $\tilde\kappa^\varepsilon$ in \eqref{eq:kappa_tilde} that
\begin{align*}
&\int_{\bV}\int_{\bV} f(x,\omega,E) f(x,\omega',E') \kappa^\varepsilon(x,\omega\cdot\omega',E, E') \domega\dE \domega'\dE' \\
&\hspace{2.0cm}
\leq
\left( \int_{\bV}\int_{\bV}  \left\vert f(x,\omega,E) \right\vert^2 \kappa^\varepsilon(x,\omega\cdot\omega',E, E') \domega\dE \domega'\dE'\right)^{1/2}
\\
&\hspace{3.0cm} \left(  \int_{\bV}\int_{\bV}  \left\vert f(x,\omega',E') \right\vert^2 \kappa^\varepsilon(x,\omega\cdot\omega',E, E') \domega\dE \domega'\dE'\right)^{1/2} \\
& = \left(  \int_{\bV}  \left\vert f(x,\omega,E) \right\vert^2 \bar \kappa^\varepsilon(x,\omega,E) \domega\dE\right)^{1/2}
\left(  \int_{\bV}  \left\vert f(x,\omega,E) \right\vert^2 \tilde \kappa^\varepsilon(x,\omega,E) \domega\dE\right)^{1/2}.
\end{align*}
We then have the following, thanks to Young's inequality and our assumption \eqref{eq:hypOP}:
\begin{small}
\begin{align*}
&\left( \cQ^\varepsilon f, f \right)_{\rL^2(\Omega\times\bV)} \\
&\hspace{1.0cm} \geq \int_{\Omega} \, {\rm d}x \int_{\bV} \left\vert f(x,\omega,E) \right\vert^2  \sigma^\varepsilon(x,\omega, E) \domega\dE 
\\
&\hspace{2.0cm} - \int_{\Omega} \left(  \int_{\bV}  \left\vert f(x,\omega,E) \right\vert^2 \bar \kappa^\varepsilon(x,\omega,E) \domega\dE\right)^{1/2}
\left(  \int_{\bV}  \left\vert f(x,\omega,E) \right\vert^2 \tilde \kappa^\varepsilon(x,\omega,E) \domega\dE\right)^{1/2}\, {\rm d}x
\\[0.2 cm]
&\hspace{1.0cm}\geq \alpha \Vert f \Vert^2_{\rL^2(\Omega\times\bV)}.
\end{align*}
\end{small}
\end{proof}
Using Proposition \ref{prop:positivityQ} in the energy arguments, one can obtain the apriori bounds in the following lemma.
\begin{lem}
If $(\sigma^\varepsilon, \kappa^\varepsilon)$ satisfy assumptions \eqref{eq:multiscaleOP} and \eqref{eq:hypOP}, then there exists $C>0$ such that for all $\varepsilon >0$, the solution $\varphi^\varepsilon$ to \eqref{eq:delta-evol-neutron-flux} satisfies
\begin{align}
\left\Vert \varphi^\varepsilon \right\Vert_{\rL^\infty((0,T);\rL^2(\Omega\times\bV))} \le C
\quad\textrm{ and }\quad
\left\Vert \varphi^\varepsilon \right\Vert_{\rL^2((0,T)\times\Omega\times\bV)} \le C.
\end{align}
\end{lem}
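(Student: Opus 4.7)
The plan is to derive both bounds simultaneously from a single energy estimate obtained by multiplying \eqref{eq:delta-evol-neutron-flux} by $\varphi^\varepsilon$ and integrating over $\Omega\times\bV$. The three terms in the resulting identity are treated separately, and the key input is Proposition \ref{prop:positivityQ}, which gives the coercivity of $\cQ^\varepsilon$ uniformly in $\varepsilon$ with constant $\alpha>0$.

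First, I would observe that the time-derivative term produces the weighted quantity
\[
\int_{\Omega\times\bV} \sqrt{\tfrac{m}{2E}}\, \varphi^\varepsilon\, \partial_t \varphi^\varepsilon \,\dd x\,\domega\dE
= \frac{1}{2}\frac{\dd}{\dd t}\int_{\Omega\times\bV} \sqrt{\tfrac{m}{2E}}\,|\varphi^\varepsilon|^2 \,\dd x\,\domega\dE.
\]
Since $E\in[\Emin,\Emax]$ with $\Emin>0$, the weight $\sqrt{m/(2E)}$ is bounded from above and below by strictly positive constants, so this quantity is equivalent to the squared $\rL^2$-norm in space, angle and energy.

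Next, the transport term is handled by integration by parts in $x$: for each fixed $(\omega,E)$,
\[
\int_\Omega \varphi^\varepsilon\, \omega\cdot\nabla_x\varphi^\varepsilon \,\dd x
= \frac{1}{2}\int_{\partial\Omega} n(x)\cdot\omega\,|\varphi^\varepsilon|^2 \,\dd S(x).
\]
After integration against $\domega\dE$, the absorption-type boundary condition $\varphi^\varepsilon=0$ on $\Gamma_-$ kills the contribution of the incoming part $\{n\cdot\omega<0\}$, leaving only a non-negative outgoing flux term which can be discarded. For the collision term, Proposition \ref{prop:positivityQ} applied at (a.e.) time $t$ yields
\[
(\cQ^\varepsilon \varphi^\varepsilon(t,\cdot),\varphi^\varepsilon(t,\cdot))_{\rL^2(\Omega\times\bV)} \geq \alpha\,\|\varphi^\varepsilon(t,\cdot)\|^2_{\rL^2(\Omega\times\bV)}.
\]

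Combining these ingredients gives the differential inequality
\[
\frac{1}{2}\frac{\dd}{\dd t}\int_{\Omega\times\bV}\sqrt{\tfrac{m}{2E}}\,|\varphi^\varepsilon|^2 \,\dd x\,\domega\dE
+ \alpha\,\|\varphi^\varepsilon(t,\cdot)\|^2_{\rL^2(\Omega\times\bV)} \leq 0.
\]
Integrating from $0$ to $t\le T$ and using the weight equivalence together with the hypothesis $\varphi_{\mathrm{in}}\in\rL^2(\Omega\times\bV)$, the first term (being non-negative and non-increasing) bounds $\|\varphi^\varepsilon(t,\cdot)\|^2_{\rL^2(\Omega\times\bV)}$ uniformly in $t$ and $\varepsilon$, giving the $\rL^\infty_t \rL^2$ estimate; integrating the second term in $t\in(0,T)$ then produces the $\rL^2_{t,x,v}$ bound, with $C$ depending only on $\alpha$, $\Emin$, $\Emax$, $m$ and $\|\varphi_{\mathrm{in}}\|_{\rL^2(\Omega\times\bV)}$.

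The only mild technical obstacle is justifying the formal multiplication by $\varphi^\varepsilon$: since $\varphi^\varepsilon$ is not \emph{a priori} regular enough for the pointwise chain rule and trace on $\Gamma_-$, one argues by a standard density/approximation procedure (for example, via smooth truncation of the initial data and of $(\sigma^\varepsilon,\kappa^\varepsilon)$, or via a Green-formula adapted to $\rL^2$ solutions of transport equations, as in \cite[Chapter 21]{DLvol6}), and then passes to the limit using the boundedness of the approximating sequence. Once this is granted, the computation above is fully rigorous.
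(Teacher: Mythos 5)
Your proposal is correct and matches the paper's intended argument: the paper states the lemma without proof, remarking only that it follows from Proposition~\ref{prop:positivityQ} ``in the energy arguments'' together with classical techniques from \cite[Chapter 21]{DLvol6}, and the weighted $\rL^2$ energy estimate you carry out (pairing with $\varphi^\varepsilon$, dropping the non-negative outgoing boundary flux, invoking the coercivity of $\cQ^\varepsilon$, then integrating in time) is exactly that. The only point worth making explicit is that the equivalence of the weighted norm $\int \sqrt{m/2E}\,|\varphi^\varepsilon|^2$ with the unweighted $\rL^2$ norm requires $\Emin>0$, which is implicit in the paper's setting but not stated as a formal hypothesis.
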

%

\subsection{Homogenization result}
\label{sec:hom-neutron}

Following a similar pathway as in our initial ODE model (see Section \ref{sec:ode}), we derive the following result for the scaled Equation \eqref{eq:delta-evol-neutron-flux}, which is proved, without loss of generality, by taking the mass of the neutron $m=2$. Since we focus on the rapid oscillations in energy, in what follows we assume no space dependence in the optical parameters.

\begin{thm}
\label{thm:homog-boltzmann}
Let $\varphi^\varepsilon =\varphi^\varepsilon (t,x,\omega,E)$ be the solution of the equation
\begin{equation}
\left\{
\begin{array}{ll}
\displaystyle\partial_t \varphi^\varepsilon + \sqrt{E}\, \omega \cdot \nabla_x \varphi^\varepsilon  + \sigma^\varepsilon\left(\omega, E \right) \varphi^\varepsilon
- \int_{E_{\rm min}}^{E_{\rm max}} \int_{\vert \omega'\vert=1} \kappa^\varepsilon\left(\omega\cdot\omega', E,E'\right) \varphi^\varepsilon(\omega',E'){\rm d}\omega'\, {\rm d}E' = 0 \\[10pt] 
\varphi^\varepsilon (0,x,\omega,E)=\varphi^\varepsilon _{\rm in}\left(x, \omega,E \right),\\[10pt] 
\varphi^\eps(t,x,\omega,E) = 0 
\quad \forall t>0 \quad \mbox{ and for }(x,\omega)\in \Gamma_-:= \left\{ (x,\omega)\in \partial\Omega\times\bS^{d-1}\ :\ {\bf n}(x)\cdot \omega < 0\right\}.
\end{array}
\right.
\label{eq:boltzmann-thm}
\end{equation}
where the coefficients and the data are of the form
\begin{align*}
\sigma^\varepsilon\left(\omega, E \right)  :=  \sqrt{E}\, \sigma \left( \omega, E, \frac{E}{\eps} \right)\quad
& \mbox{ with }\quad
\sigma\left(\omega,E, y \right) \in \rL^\infty(\bV;\mathrm{C}_{\per}(0,1))
\\
\varphi^\varepsilon_{\rm in} (x,\omega,E) := \varphi _{\rm in}\left(x, \omega,E,  \frac{E}{\eps} \right)\quad
& \mbox{ with }\quad
\varphi _{\rm in}\left(x, \omega,E , y \right)\in \rL^2(\Omega\times \bV;\mathrm{C}_{\per}(0,1))
\\
\kappa^\varepsilon(\omega\cdot\omega',E,E') : = \sqrt{E}\, \kappa_1(\omega\cdot\omega',E) \kappa_2\( \omega\cdot\omega',E', \frac{E'}{\varepsilon} \)
& \mbox{ with }\, 
 \kappa_1(\eta,E) \in \rL^\infty\left([-1,1]\times [E_{\min}, E_{\max}]\right) 
\\
~
 & \mbox{ and }\, 
 \kappa_2\(\eta,E', y' \) \in \rL^\infty\left([-1,1]\times [E_{\min}, E_{\max}];\mathrm{C}_{\per}(0,1)\right).
\end{align*}
Then, 
$$
\varphi^\eps \weak \varphi_{\rm hom} \quad \mbox{ weakly in }\rL^2((0,T)\times\Omega\times \bV)
$$
and $\varphi_{\rm hom}$
satisfies the following partial integro-differential equation
$$
\partial_t \varphi_{\rm hom} + \sqrt{E}\, \omega\cdot\nabla_x \varphi_{\rm hom}+
\sqrt{E}\, \langle{\sigma}\rangle \varphi_{\rm hom} -\int_{E_{\rm min}}^{E_{\rm max}} \int_{\mathbb{S}^{d-1}} \sqrt{E}\, \kappa_1(\omega\cdot\omega',E)
\int_0^1 \kappa_2(\omega\cdot\omega',E',y') \varphi_{\rm hom}{\rm d} y' {\rm d}\omega' {\rm d}E' 
 = 
$$
$$
\int_{E_{\rm min}}^{E_{\rm max}} \int_{\mathbb{S}^{d-1}} \sqrt{E}\, \kappa_1(\omega\cdot\omega',E)
\int_0^1 \kappa_2(\omega\cdot\omega',E',y') 
\left[
e^{-t\sqrt{E'}\mathcal{L}_\sigma} \mathcal{L}_1 \varphi_{\rm in}- \int_0^t e^{-(t-s)\sqrt{E'}\mathcal{L}_\sigma} \sqrt{E'}\mathcal{L}_1 
\sigma(\omega',E',y')\varphi_{\rm hom}\, {\rm d}s \right]
 {\rm d} y' {\rm d}\omega' {\rm d}E'
$$
$$ 
-\sqrt{E}\int_0^1
\sigma(\omega,E,y)\
\left[
e^{-t\sqrt{E}\mathcal{L}_\sigma} \mathcal{L}_1 \varphi_{\rm in}- \int_0^t e^{-(t-s)\sqrt{E}\mathcal{L}_\sigma} \sqrt{E}\mathcal{L}_1 
\sigma(\omega,E,y) \varphi_{\rm hom}\, {\rm d}s \right]
\, {\rm d}y,
$$
with initial condition
$$
\varphi_{\rm hom}(0,x,\omega,E) = \langle \varphi_{\rm in}(x,\omega,E,\cdot) \rangle
$$
and zero absorption condition at the in-flux phase-space boundary.
\end{thm}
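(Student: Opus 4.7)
The plan is to port the proof strategy of Theorem \ref{thm:exm:ode} into the kinetic setting: replace the ODE time derivative by the transport operator $\partial_t + \sqrt{E}\,\omega\cdot\nabla_x$ and apply the method of characteristics to reduce the remainder equation to a family of ODEs in $(t,y)$ indexed by the characteristic curves. First, the a priori bound of the preceding lemma together with Theorem \ref{thm:two-scale-compact} yields, up to a subsequence, a two-scale limit $\varphi^0(t,x,\omega,E,y) \in \rL^2((0,T)\times\Omega\times\bV\times(0,1))$. Testing \eqref{eq:boltzmann-thm} against smooth $\psi(t,x,\omega,E,E/\varepsilon)$, periodic in the last argument, and letting $\varepsilon\to 0$ identifies the two-scale equation
\begin{equation*}
\partial_t \varphi^0 + \sqrt{E}\,\omega\cdot\nabla_x \varphi^0 + \sqrt{E}\,\sigma(\omega,E,y)\,\varphi^0 \;=\; \mathrm{Scat}(\varphi^0),
\end{equation*}
where $\mathrm{Scat}(\varphi^0) := \int_{\bV}\sqrt{E}\,\kappa_1(\omega\cdot\omega',E)\int_0^1 \kappa_2(\omega\cdot\omega',E',y')\,\varphi^0(t,x,\omega',E',y')\,\dd y'\,\dd\omega'\,\dd E'$. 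The streaming and absorption contributions pass to the limit in the standard way; the main technical obstacle is the scattering integral, and this is precisely where the product structure of $\kappa^\varepsilon$ becomes essential, since it localizes the $\varepsilon$-oscillation in the single factor $\kappa_2(\omega\cdot\omega',E',E'/\varepsilon)$, which can then be paired with $\varphi^\varepsilon$ through two-scale convergence.

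Mimicking Section \ref{sec:ode}, I would then decompose $\varphi^0 = \varphi_{\rm hom}(t,x,\omega,E) + r(t,x,\omega,E,y)$ with $\langle r\rangle_y = 0$; by Proposition \ref{prop:weaklimit2scale}, $\varphi_{\rm hom}$ coincides with the weak $\rL^2$-limit of $\varphi^\varepsilon$. Averaging the two-scale equation in $y$ and subtracting it from the full equation yields the coupled system
\begin{align*}
\partial_t \varphi_{\rm hom} + \sqrt{E}\,\omega\cdot\nabla_x \varphi_{\rm hom} + \sqrt{E}\,\langle\sigma\rangle\,\varphi_{\rm hom} &\;=\; \mathrm{Scat}(\varphi_{\rm hom}) + \mathrm{Scat}(r) - \sqrt{E}\,\langle\sigma r\rangle_y,\\
\partial_t r + \sqrt{E}\,\omega\cdot\nabla_x r + \sqrt{E}\,\mathcal{L}_\sigma\, r &\;=\; -\sqrt{E}\,(\mathcal{L}_1\sigma)\,\varphi_{\rm hom},
\end{align*}
with initial data $\varphi_{\rm hom}(0,\cdot) = \langle\varphi_{\rm in}\rangle_y$ and $r(0,\cdot) = \mathcal{L}_1 \varphi_{\rm in}$, and vanishing incoming boundary trace. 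Note that the transport term $\sqrt{E}\,\omega\cdot\nabla_x r$ drops out of the averaged equation because $y$-averaging commutes with $\nabla_x$ and $\langle r\rangle_y = 0$.

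Finally, reading the $r$-equation along the characteristic $s \mapsto x + \sqrt{E}\,\omega\,s$ at fixed $(\omega,E)$ turns it into a linear inhomogeneous Cauchy problem in $\rL^2_{\per}(0,1)$ driven by the semigroup $e^{-t\sqrt{E}\mathcal{L}_\sigma}$ introduced in Section \ref{sec:ode}, whose Duhamel representation reads
\begin{equation*}
r(t,x,\omega,E,y) \;=\; e^{-t\sqrt{E}\mathcal{L}_\sigma}\mathcal{L}_1 \varphi_{\rm in} \;-\; \int_0^t e^{-(t-s)\sqrt{E}\mathcal{L}_\sigma}\sqrt{E}\,(\mathcal{L}_1 \sigma)\,\varphi_{\rm hom}(s,\cdot)\,\dd s,
\end{equation*}
with the spatial arguments shifted appropriately along the characteristic. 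Plugging this expression into $\sqrt{E}\,\langle\sigma r\rangle_y$ in the $\varphi_{\rm hom}$ equation produces the last group of memory terms in the statement, while plugging it into $\mathrm{Scat}(r)$ (with $E$ replaced by $E'$ inside the scattering integral) produces the middle group; rearranging then gives the stated integro-differential equation. Uniqueness for the limit problem finally upgrades subsequential two-scale convergence to weak $\rL^2$-convergence of the full sequence.
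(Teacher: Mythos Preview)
Your proposal is correct and reaches the same coupled system and memory representation as the paper, but the two proofs are organised in a different order. The paper applies the method of characteristics \emph{first}, at the level of $\varphi^\varepsilon$: it introduces $\psi^\varepsilon(t,r,\omega,E)\coloneqq\varphi^\varepsilon(t,r+\sqrt{E}\omega t,\omega,E)$, writes down the explicit Duhamel formula for $\psi^\varepsilon$, and passes to the two-scale limit inside that integral formula. This has the pleasant consequence that the resulting two-scale equation for $\psi^0$ contains no $x$-derivatives at all, so the remainder $\rho$ satisfies a genuine ODE in $t$ and the semigroup $e^{-t\sqrt{E}\mathcal{L}_\sigma}$ appears without any further change of variables; the transport operator reappears only at the very end, when one returns from $\psi_{\rm hom}$ to $\varphi_{\rm hom}$. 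By contrast, you pass to the two-scale limit directly in the weak formulation of \eqref{eq:boltzmann-thm}, keep the transport term $\sqrt{E}\,\omega\cdot\nabla_x$ throughout, and invoke characteristics only at the last step to integrate the remainder equation. Both routes are legitimate and exploit the separable structure of $\kappa^\varepsilon$ at the same place (to pair the $E'$-oscillation of $\kappa_2$ with $\varphi^\varepsilon$); the paper's ordering saves one change of variables and makes the ``spatial shifts along the characteristic'' that you allude to completely transparent, while your ordering is closer to the standard two-scale template and makes clearer which term in the limit equation each piece of the decomposition is responsible for.
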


\begin{proof}
Following the same strategy as in Theorem \ref{thm:exm:ode}, thanks to the method of characteristics, we can write the explicit solution to \eqref{eq:boltzmann-thm} as
\begin{equation*}
\begin{aligned}
\varphi^\varepsilon& (t,x,\omega,E) = \varphi^\eps_{\rm in}\left(x-\sqrt{E}\omega t, \omega,E\right) e^{-t\sigma^\varepsilon\left(\omega,E\right)}
\\
& + \int_0^t \int_{\bV} \kappa^\varepsilon \left(\omega\cdot\omega',E, E'\right) 
e^{-(t-s)\sigma^\varepsilon\left(\omega,E\right)} \varphi^\varepsilon\left(s,x-(t-s)\sqrt{E}\omega,\omega',E'\right)\, {\rm d}\omega' {\rm d}E' {\rm d}s.
\end{aligned}
\end{equation*}
Making the change of variables $x-\sqrt{E}\omega t=r$, we get
\begin{equation*}
\begin{aligned}
\varphi^\varepsilon& (t,r+\sqrt{E}\omega t,\omega,E) = \varphi^\eps_{\rm in}\left(r, \omega,E\right) e^{-t\sigma^\varepsilon\left(\omega,E\right)} 
\\
& \hspace{2.0cm}+ \int_0^t \int_{\bV} \kappa^\varepsilon \left(\omega\cdot\omega',E, E'\right) e^{-(t-s)\sigma^\varepsilon\left(\omega,E\right)} \varphi^\varepsilon\left(s,r+s\sqrt{E}\omega,\omega',E'\right)\, {\rm d}\omega'\, {\rm d}E'\, {\rm d}s.
\end{aligned}
\end{equation*}
Defining 
\[
\varphi^\varepsilon(t,r+\sqrt{E}\omega t,\omega,E) =: \psi^\varepsilon(t,r,\omega, E)
\]
we have the following expression for the unknown $\psi^\varepsilon$:
\begin{equation}\label{eq:neut:explicit-psi-delta}
\begin{aligned}
\psi^\varepsilon& (t,r,\omega,E) = \varphi^\eps_{\rm in}\left(r, \omega,E\right) e^{-t\sigma^\varepsilon\left(\omega,E\right)} 
\\
& \hspace{2.0cm}+ \int_0^t \int_{\bV}\kappa^\varepsilon \left(\omega\cdot\omega',E, E'\right) e^{-(t-s)\sigma^\varepsilon\left(\omega,E\right)} \psi^\varepsilon\left(s,r,\omega',E'\right)\, {\rm d}\omega'\, {\rm d}E'\, {\rm d}s.
\end{aligned}
\end{equation}
We can pass to the limit in the expression \eqref{eq:neut:explicit-psi-delta} as $\varepsilon\to0$ under the assumptions on the scattering kernel which have been listed in the statement of the theorem.


As the term of interest in the impending asymptotic analysis is the integral term in \eqref{eq:neut:explicit-psi-delta}, we shall detail the asymptotic procedure for that term alone. Multiplying the said integral term by a test function $g\left(E,\frac{E}{\eps}\right)$ and integrating over the $E$ variable and exploiting the structure of the scattering kernel $\kappa^\eps$, we have
\begin{equation}\label{eq:integralTerm}
\begin{aligned}
&\int_{E_{\rm min}}^{E_{\rm max}} \int_0^t \int_{E_{\rm min}}^{E_{\rm max}} \int_{\mathbb{S}^{d-1}}\kappa^\varepsilon \left(\omega\cdot\omega',E, E'\right)  e^{-(t-s)\sigma^\varepsilon\left(\omega,E\right)} \psi^\varepsilon\left(s,r,\omega',E'\right)  g\( E,\frac{E}{\eps} \)  \, {\rm d}\omega'\, {\rm d}E'\, {\rm d}s\, {\rm d}E \\
& \qquad \qquad \qquad = \int_{E_{\rm min}}^{E_{\rm max}} \int_0^t  \int_{\mathbb{S}^{d-1}} \sqrt{E}\, \kappa_1(\omega\cdot\omega',E) e^{-(t-s)\sigma^\varepsilon\left(\omega,E\right)} w^\varepsilon(s,r,\omega,\omega') g\( E,\frac{E}{\eps} \)\, {\rm d}\omega'\, {\rm d}s\, {\rm d}E, 
\end{aligned}
\end{equation}
where we have used the notation
$$
w^\varepsilon(s,r,\omega,\omega') := \int_{E_{\rm min}}^{E_{\rm max}} \kappa_2\(\omega\cdot\omega',E', \frac{E'}{\eps} \)  \psi^\varepsilon\left(s,r,\omega',E'\right) {\rm d}E'.
$$
For every $(s,r,\omega,\omega')\in (0,T)\times \Omega \times \mathbb{S}^{d-1}\times \mathbb{S}^{d-1}$,
$$
w^\varepsilon(s,r,\omega,\omega') \to w^{0} (s,r,\omega,\omega') =\int_{E_{\rm min}}^{E_{\rm max}} \int_0^1 \kappa_2(\omega\cdot\omega',E',y') \psi^0(s,r,\omega',E',y')\,  {\rm d}y'\, {\rm d}E'
$$
pointwise as $\varepsilon\to0$,
where $\psi^0$ is the two-scale limit of $\psi^\varepsilon$. Therefore we have two-scale convergence of \eqref{eq:integralTerm}, namely
\begin{align*}
&\int_{E_{\rm min}}^{E_{\rm max}} \int_0^t  \int_{\mathbb{S}^{d-1}} \sqrt{E}\, \kappa_1(\omega\cdot\omega',E) e^{-(t-s)\sigma^\varepsilon\left(\omega,E\right)} w^\varepsilon(s,y,\omega,\omega') g\( E,\frac{E}{\eps} \)\, {\rm d}\omega'\, {\rm d}s\, {\rm d}E
\\
&\qquad \qquad \weak
\int_{E_{\rm min}}^{E_{\rm max}} \int_0^t  \int_{\mathbb{S}^{d-1}} \int_0^1
\sqrt{E}\, \kappa_1(\omega\cdot\omega',E) e^{-(t-s)\sqrt{E}\sigma\left(\omega,E,y\right)} w^0(s,r,\omega,\omega') g(E,y)
\, {\rm d}y \, {\rm d}\omega'\, {\rm d}s\, {\rm d}E.
\end{align*}
As a result of this calculation, passing to the limit in \eqref{eq:neut:explicit-psi-delta} in the sense of two-scale yields
\begin{align*}
\psi^0& (t,r,\omega,E,y) = \textcolor{black}{\varphi_{\rm in}\left(r, \omega,E,y\right)} e^{-t\sqrt{E}\sigma(\omega,E,y)} 
\\
&\qquad \quad + \int_0^t \int_{E_{\rm min}}^{E_{\rm max}} \int_{\mathbb{S}^{d-1}} \int_0^1
\sqrt{E}\, \kappa_1(\omega\cdot\omega',E) e^{-(t-s)\sqrt{E}\sigma(\omega,E,y)} \kappa_2\(\omega\cdot\omega', E', y' \) \psi^0(s,r,\omega',E',y')
\, {\rm d}y'\, {\rm d}\omega'\, {\rm d}E'\, {\rm d}s
\end{align*}
which implies that $\psi^0$ solves the two-scale evolution
\begin{equation*}
\left\{
\begin{aligned}
\partial_t \psi^0 (t,r,\omega,E,y) & + \sqrt{E}\, \sigma(\omega,E,y) \psi^0 (t,r,\omega,E,y)
\\
& = 
\int_{E_{\rm min}}^{E_{\rm max}} \int_{\mathbb{S}^{d-1}} 
\sqrt{E}\, \kappa_1(\omega\cdot\omega', E)
\int_0^1
 \kappa_2\(\omega\cdot\omega', E', y' \) \psi^0(s,r,\omega',E',y')
\, {\rm d}y'\, {\rm d}\omega'\, {\rm d}E',
\\[0.2 cm]
\psi^0(0,r,\omega,E,y) & = \textcolor{black}{\varphi_{\rm in}\left(r, \omega,E,y\right)}.
\end{aligned}
\right.
\end{equation*}
Proceeding in the same way as in the proof of Theorem \ref{thm:exm:ode}, we decompose the two-scale limit $\psi^0$ into a homogeneous part and a remainder, which is of zero mean over the periodic cell, i.e.,
\begin{align*}
\psi^0(t,r,\omega,E,y) = \psi_{\rm hom}(t,r,\omega,E) + \rho(t,r,\omega,E,y) 
\end{align*}
where
\[
\psi_{\rm hom}(t,r,\omega,E) \coloneqq \langle \psi^0 \rangle (t,r,\omega,E)
\qquad \qquad
\mbox{ and }
\quad\qquad 
\langle \rho \rangle = 0.
\]
We obtain the following coupled system for $\psi_{\rm hom}$ and $\rho$:
\begin{equation}\label{eq:boltz-two-scale}
\left\{
\begin{aligned}
\partial_t \psi_{\rm hom} + \sqrt{E}\, \langle{\sigma}\rangle \psi_{\rm hom} - S( \psi_{\rm hom})
& = S(\rho)
-\sqrt{E}\langle{\sigma\rho}\rangle
\\
\partial_t \rho +\sqrt{E}\left( \sigma \rho - \langle \sigma \rho \rangle \right) &= \sqrt{E}\left(\langle \sigma \rangle -\sigma \right)\psi_{\rm hom}
\\
\psi_{\rm hom}(0,r,\omega,E) &= \langle \varphi_{\rm in}(r,\omega,E,\cdot) \rangle
\\
\rho(0,r,\omega,E,y) &= \mathcal{L}_1 \varphi_{\rm in}(r,\omega,E,y) ,
\end{aligned}\right.
\end{equation}
where
$$
S(\xi):= \int_{E_{\rm min}}^{E_{\rm max}} \int_{\mathbb{S}^{d-1}} \sqrt{E}\, \kappa_1(\omega\cdot\omega',E)
\int_0^1 \kappa_2(\omega\cdot\omega',E',y') \xi(s,r,\omega',E',y') {\rm d} y' {\rm d}\omega' {\rm d}E' 
$$
for any $\xi\in \rL^2((0,T)\times\Omega\times\bV; \mathrm{C}_{\per}(0,1))$. The coupled system written above has the same structure as system \eqref{eq:exm:coupled-system}. Hence, following the same lines as in Section \ref{sec:ode}, we can deduce
$$
\rho(t,r,\omega,E,y)=e^{-t\sqrt{E}\mathcal{L}_\sigma} \mathcal{L}_1 \varphi_{\rm in}(r,\omega,E,y)- \int_0^t e^{-(t-s)\sqrt{E}\mathcal{L}_\sigma} \sqrt{E}\mathcal{L}_1 \sigma(\omega,E,y) \psi_{\rm hom}(s,r,\omega,E) \, {\rm d}s
$$
and, finally, substituting this expression in the evolution equation for $\psi_{\rm hom}$, we obtain
\begin{equation*}
\left\{
\begin{aligned}
& \partial_t \psi_{\rm hom} + \sqrt{E}\, \langle{\sigma}\rangle \psi_{\rm hom} -S( \psi_{\rm hom}) = S(\rho)
-\sqrt{E}\, \langle{\sigma\rho}\rangle \\
& \psi_{\rm hom}(0,r,\omega,E) = \langle \varphi_{\rm in}(r,\omega,E,\cdot) \rangle,
\end{aligned}\right.
\end{equation*}
that is
$$
\partial_t \psi_{\rm hom} + \sqrt{E}\, \langle{\sigma}\rangle \psi_{\rm hom} -\int_{E_{\rm min}}^{E_{\rm max}} \int_{\mathbb{S}^{d-1}} \sqrt{E}\, \kappa_1(\omega\cdot\omega',E)
\int_0^1 \kappa_2(\omega\cdot\omega',E',y') \psi_{\rm hom}{\rm d} y' {\rm d}\omega' {\rm d}E' 
 = 
$$
$$
\int_{E_{\rm min}}^{E_{\rm max}} \int_{\mathbb{S}^{d-1}} \sqrt{E}\, \kappa_1(\omega\cdot\omega',E)
\int_0^1 \kappa_2(\omega\cdot\omega',E',y') 
\left[
e^{-t\sqrt{E'}\mathcal{L}_\sigma} \mathcal{L}_1 \varphi_{\rm in}- \int_0^t e^{-(t-s)\sqrt{E'}\mathcal{L}_\sigma} \sqrt{E'}\mathcal{L}_1 
\sigma(\omega',E',y') \psi_{\rm hom}\, {\rm d}s \right]
 {\rm d} y' {\rm d}\omega' {\rm d}E'
$$
$$ 
-\sqrt{E}
\int_0^1
\sigma(\omega,E,y)\
\left[
e^{-t\sqrt{E}\mathcal{L}_\sigma} \mathcal{L}_1 \varphi_{\rm in}- \int_0^t e^{-(t-s)\sqrt{E}\mathcal{L}_\sigma} \sqrt{E}\mathcal{L}_1 
\sigma(\omega,E,y) \psi_{\rm hom}\, {\rm d}s \right]
\, {\rm d}y,
$$
with initial condition
$$
\psi_{\rm hom}(0,r,\omega,E) = \langle \varphi_{\rm in}(r,\omega,E,\cdot) \rangle.
$$
By going back to the original variables, we finally obtain the thesis of the theorem.
\end{proof}
\begin{rem}
Some comments are in order with regards to the assumption on the optical parameters $\sigma^\varepsilon$ and $\kappa^\varepsilon$ made in Theorem \ref{thm:homog-boltzmann}.
\begin{itemize}
\item The assumption of separability 
\[
\kappa^\varepsilon(\omega\cdot\omega',E,E') : = \sqrt{E}\, \kappa_1(\omega\cdot\omega',E) \kappa_2\( \omega\cdot\omega',E', \frac{E'}{\varepsilon} \)
\]
has simplified our computations in the proof. It also lead to a relatively simpler homogenized model. 
\item In the above separable structure, we can further allow the factor $\kappa_1$ to oscillate in the $E$-variable. The proof of Theorem \ref{thm:homog-boltzmann} can be reworked in this case, at the price of arriving at a more complex two-scale system than \eqref{eq:boltz-two-scale}. The memory structure remains the same but with additional terms.
\item It is apparent from the proof of Theorem \ref{thm:homog-boltzmann} that the energy oscillations in $\sigma$, not those in the scattering kernel, resulted in the memory effects in the homogenized limit.
\end{itemize}
\end{rem}

\section{Numerical tests}
\label{sec:numerics}
We present some results on admittedly simple cases, aiming primarily at studying the convergence rate towards the homogenized solution as the period of the oscillating terms tends to zero.

\subsection{Collision term $\kappa$ inside the integral}
We consider the problem
\begin{equation}\label{eq:ex-numerics-2-eps}
\left\{
\begin{aligned}
\partial_t \varphi^\eps(t,E) + \sigma\left(\frac{E}{\eps}\right) \varphi^\eps(t,E) & = \int_{E_{\min}}^{E_{\max}} \kappa\left(\frac{E'}{\eps}\right) \varphi^\eps(t,E')\, {\rm d}E'
\\[0.2 cm]
\varphi^\eps(0,E) & = \varphi_{\rm in}(E)
\end{aligned}
\right.
\end{equation}
for $(t,E)\in [0,T]\times (E_{\min}, E_{\max})$. This equation is a simple instance of the linear Boltzmann equation \eqref{eq:boltzmann-thm} with neither space nor angular dependence. 
In the following, we set $(E_{\min}, E_{\max})=(0,1)$ and the final time to $T=10$. By Theorem \ref{thm:homog-boltzmann}, we know that $\varphi^\eps$ weakly converges in $\mathrm L^2((0,T)\times (E_{\min}, E_{\max}))$ to $\varphi_{\rm hom}$ under some assumptions on the optical parameters. We study here the convergence rate for different functions $\sigma$, $\kappa$ and initial value $\varphi_{\rm in}$. For this, we introduce the family of orthogonal Legendre polynomials in $\mathrm L^2(E_{\min}, E_{\max})$ which we denote by $\{\ell_k\}_{k\geq 0}$ and define the modes
\begin{equation}
m_k^\eps(t) \coloneqq \left( \varphi^\eps(t,\cdot), \ell_k(\cdot) \right)_{\mathrm L^2([E_{\min}, E_{\max}])},\quad k\geq 0,
\end{equation}
of the solution for $t\in[0,T]$. Likewise,
\begin{equation}
m^{\hom}_k(t) := \left(\varphi_{\hom} (t,\cdot), \ell_k(\cdot) \right)_{\mathrm L^2([E_{\min}, E_{\max}])},\quad k\geq 0,
\end{equation}
are the modes of the homogenized solution $\varphi_{\hom}$. We know that $\varphi^\eps\weak \varphi_{\hom}$ weakly in $\mathrm L^2([E_{\min}, E_{\max}])$ if
\begin{equation}
\label{eq:testConv}
e^\eps_k\coloneqq \max_{t\in[0,T]} | m_k^\eps(t) - m^{\hom}_k(t) | \to 0, \qquad \mbox{ as }\eps\to 0\quad\forall k\geq0.
\end{equation}
In the following, we use this sufficient condition to give numerical evidence of weak convergence. Note that, in practice, we cannot inspect all the values
$k\geq0$ so we present results for the first eight modes.

\medskip
\noindent
\textbf{First example:} We take
\begin{equation*}
\sigma(y)=2+\frac 1 2 \sin(2\pi y),\quad \kappa(y')=1+\frac 1 2 \sin(2\pi y'),\quad \varphi_{\rm in}(y) = 1+\sin(2\pi y).
\end{equation*}
We compute the numerical solution $\varphi^\eps$ with an explicit Runge Kutta method of order four with time step ${\rm d}t=T/50$. We take a uniform mesh of size $h$ for the discretization of the oscillatory variable. We refine $h$ depending on the value of $\eps$ to be able to resolve the oscillations. Our choice is to take $h=\eps/100$. This discretization might be an overkill and one could probably achieve a good accuracy with less degrees of freedom.

We next give numerical evidence that $\varphi^\eps$ weakly converges to $\varphi_{\hom}$ in $\mathrm L^2((0,T)\times (E_{\min},E_{\max}))$ through the plot of the 
the error $e_k^\eps$ in figure \ref{fig:conv-rate-sin-sin-sin-2}. The plot shows that $e_k^\eps$ tends to 0 as $\eps\to0$ for $k=0,\dots,7$ at a rate slightly lower than $\eps$. Next, we illustrate some kind of ``strong'' two-scale convergence by showing convergence in the norm:
\begin{equation}\label{eq:numerical-supp-strong}
\left\vert \left\Vert \varphi^\eps \right\Vert_{\mathrm L^2((0,T)\times (E_{\min},E_{\max}))} - \left\Vert \varphi^0 \right\Vert_{\mathrm L^2((0,T)\times (E_{\min},E_{\max})\times(0,1))} \right\vert \to 0 \qquad \mbox{ as }\eps\to0.
\end{equation}
Figure \ref{fig:conv-rate-sin-sin-sin-2} shows that this is indeed the case. We refer the readers to \cite[Theorem 1.8 on p.1488]{Allaire_1992} for more details on strong two-scale convergence.

\begin{figure}[h!]
\centering
\begin{subfigure}[b]{0.45\textwidth}
\includegraphics[width=\textwidth]{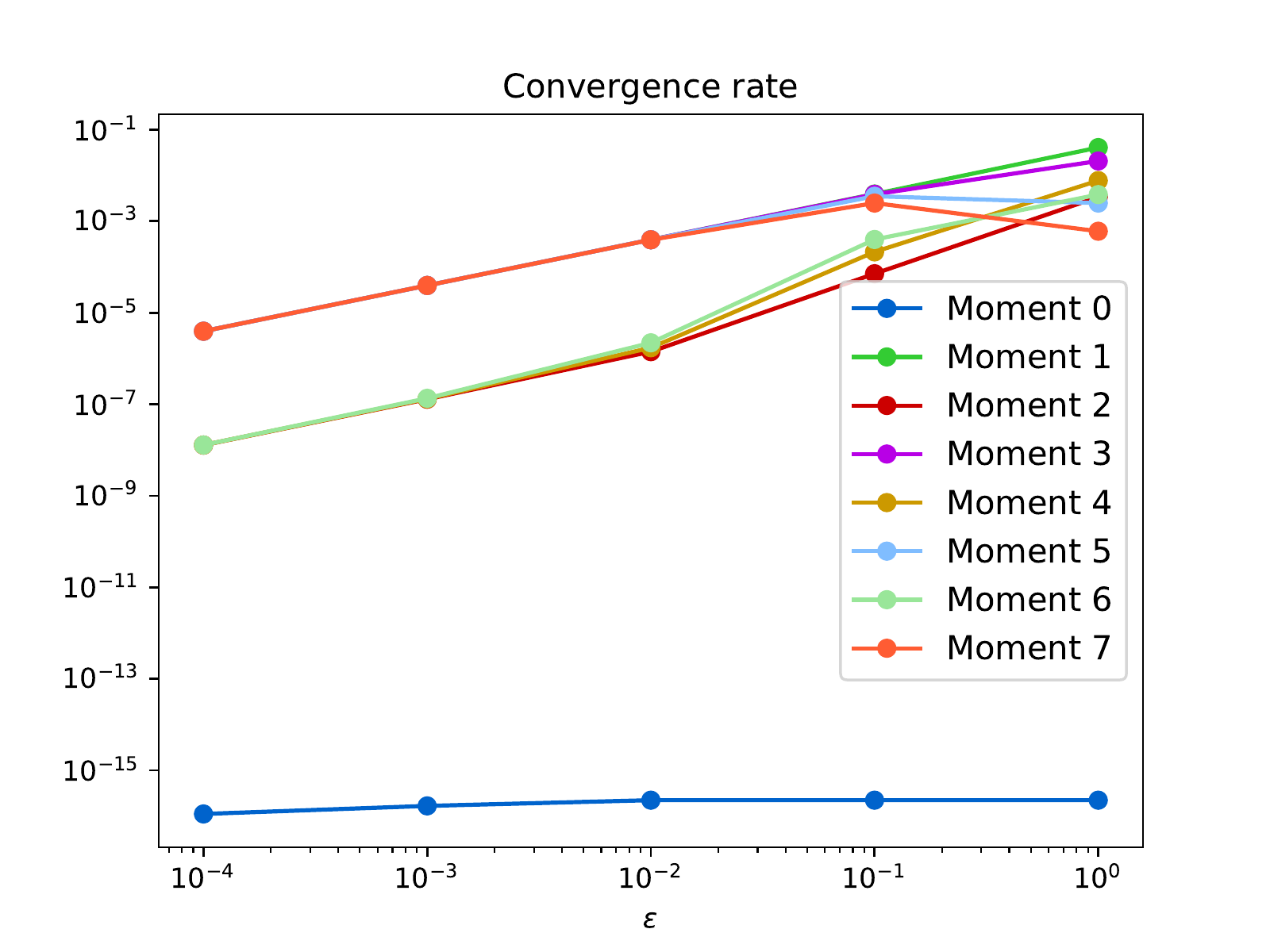}
\caption{Convergence rate in $\eps$ of the error $e_k^\eps$}
\end{subfigure}
~
\begin{subfigure}[b]{0.45\textwidth}
\includegraphics[width=\textwidth]{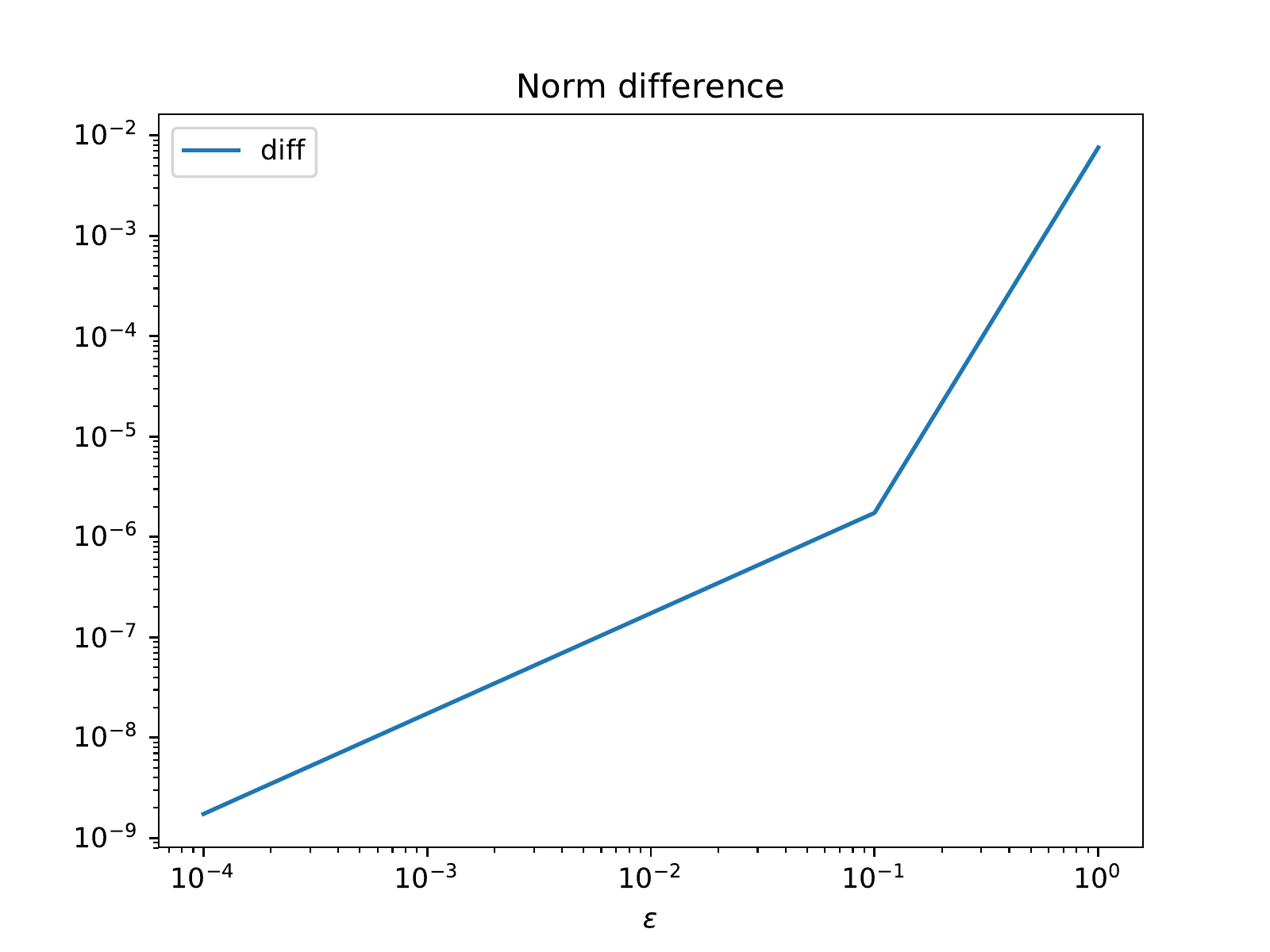}
\caption{Norm difference $\vert \Vert \varphi^\eps \Vert_{\mathrm L^2} - \Vert \varphi^0 \Vert_{\mathrm L^2} \vert$}
\end{subfigure}
\caption{Function $\kappa$ inside the integral. First example.}
\label{fig:conv-rate-sin-sin-sin-2}
\end{figure}

\medskip
\noindent
\textbf{Second example:} Taking the first example as a starting point, we study the impact on the convergence rate of the initial value and consider
\begin{equation*}
\sigma(y)=2+\frac 1 2 \sin(2\pi y),\quad k(y')=1+\frac 1 2 \sin(2\pi y'),\quad \varphi_{\rm in}(y) = 1+\mathds{1}_{\{y\leq \frac12\}}(y).
\end{equation*}
Here and in the following $\mathds{1}_{A}$ denotes the indicator function of a set $A\subset [0,1]$, i.e., $\mathds{1}_{A}(y)=1$ if $y\in A$ and $0$ otherwise. As a result, in our example $\varphi_{\rm in}(y)$ has a jump at $y=\frac12$. Figure \ref{fig:conv-rate-haar-sin-sin-2} shows $e_k^\eps$ as a function of $\eps$. We observe that the discontinuity in the initial condition does not affect the overall convergence rate. Here again, we observe that \eqref{eq:numerical-supp-strong} holds.
\begin{figure}[h!]
\centering
\begin{subfigure}[b]{0.45\textwidth}
\includegraphics[width=\textwidth]{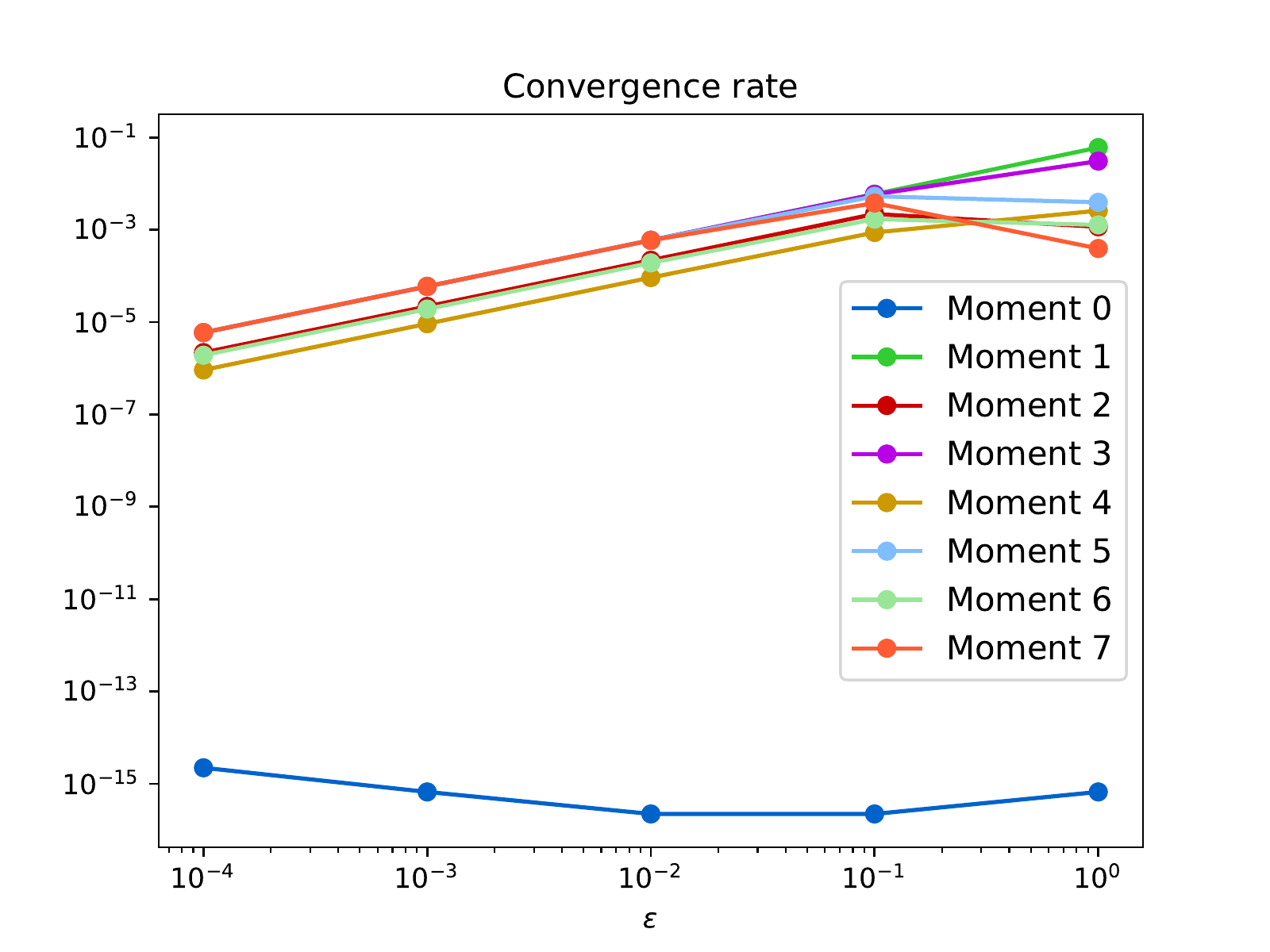}
\caption{Convergence rate in $\eps$ of the error $e_k^\eps$}
\end{subfigure}
~
\begin{subfigure}[b]{0.45\textwidth}
\includegraphics[width=\textwidth]{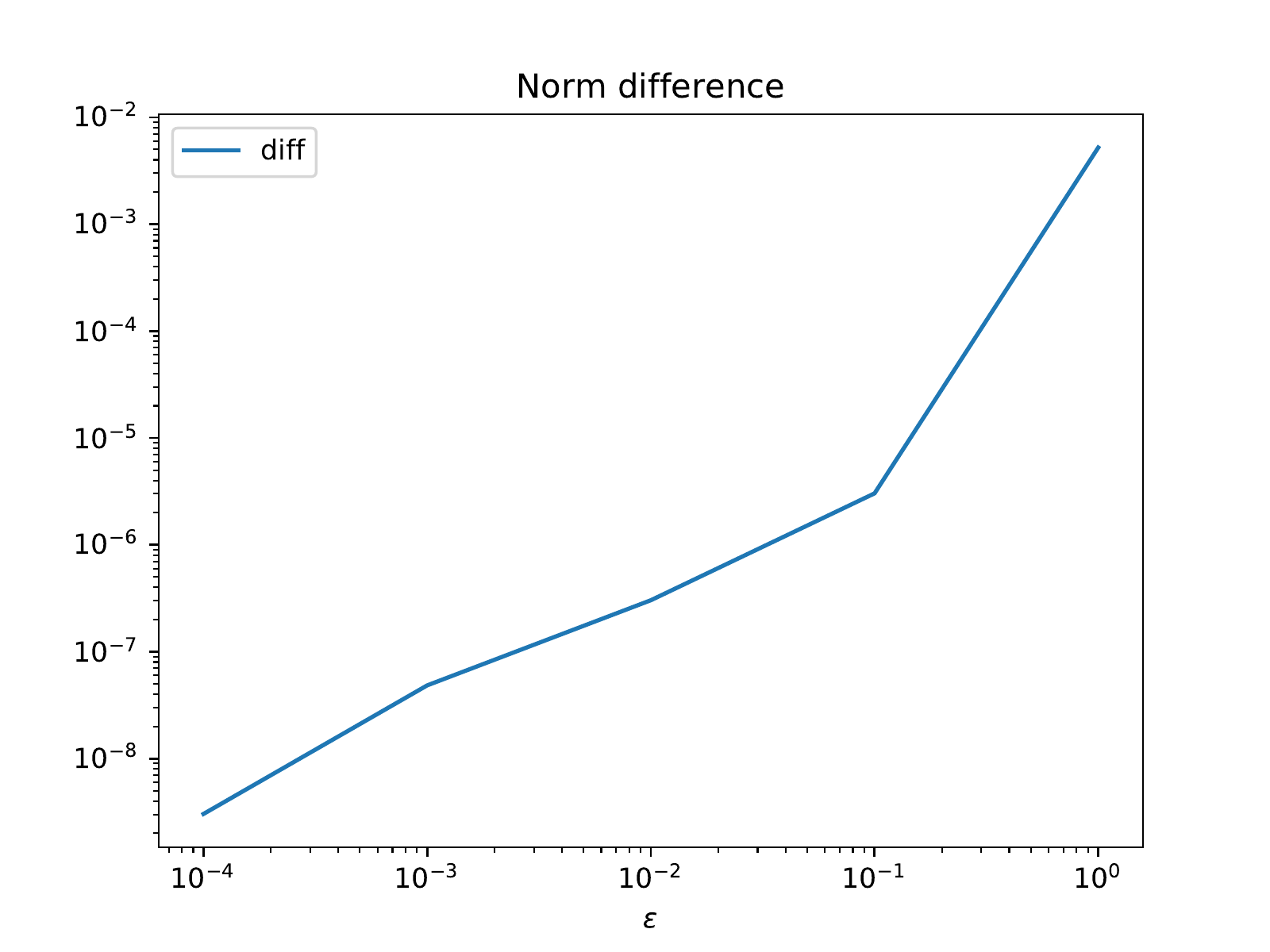}
\caption{Norm difference $\vert \Vert \varphi^\eps \Vert_{\mathrm L^2} - \Vert \varphi^0 \Vert_{\mathrm L^2} \vert$}
\end{subfigure}
\caption{Function $\kappa$ inside the integral. Second example.}
\label{fig:conv-rate-haar-sin-sin-2}
\end{figure}

\medskip
\noindent
\textbf{Third example:} Taking once again the first example as a starting point, we now investigate the impact on the convergence rate that oscillatory and discontinuous functions $\sigma$ and $k$ can have. We consider
\begin{equation*}
\sigma(y)=2+\frac 1 2 \mathds{1}_{\{\sin(2\pi y) \geq0  \}}(y),\quad \kappa(y')=1+\frac 1 2 \mathds{1}_{\{\sin(2\pi y') \geq0  \}}(y'),\quad \varphi_{\rm in}(y) = 1+\sin(2\pi y)
\end{equation*}
and figure \ref{fig:conv-rate-sin-haar-haar-2} shows the convergence of $e_k^\eps$ and of the norm difference. We see that weak convergence occurs at a similar rate as before, indicating that the regularity of the oscillatory functions might have a mild impact on the convergence.

\begin{figure}[h!]
\centering
\begin{subfigure}[b]{0.45\textwidth}
\includegraphics[width=\textwidth]{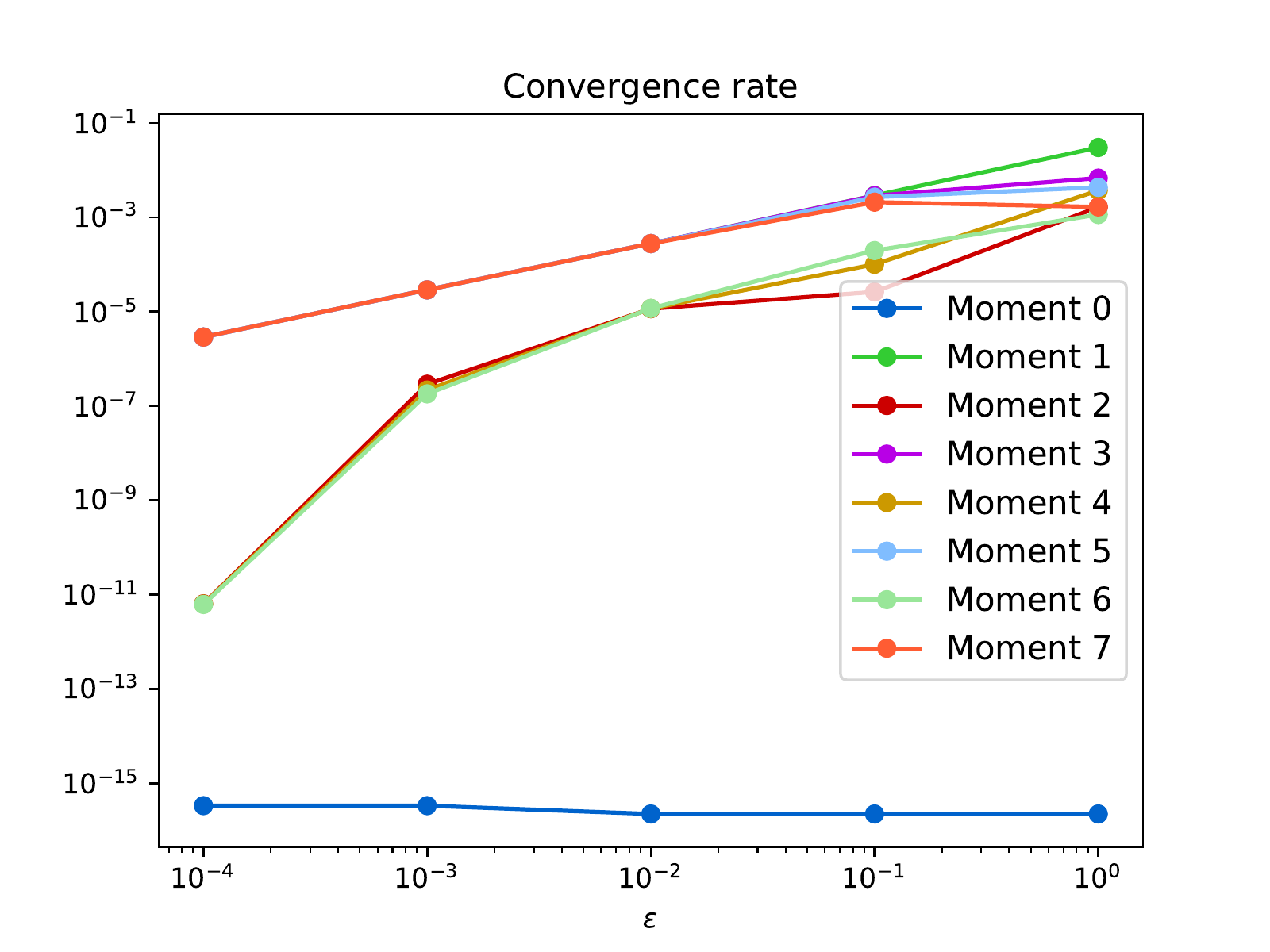}
\caption{Convergence rate in $\eps$ of the error $e_k^\eps$}
\end{subfigure}
~
\begin{subfigure}[b]{0.45\textwidth}
\includegraphics[width=\textwidth]{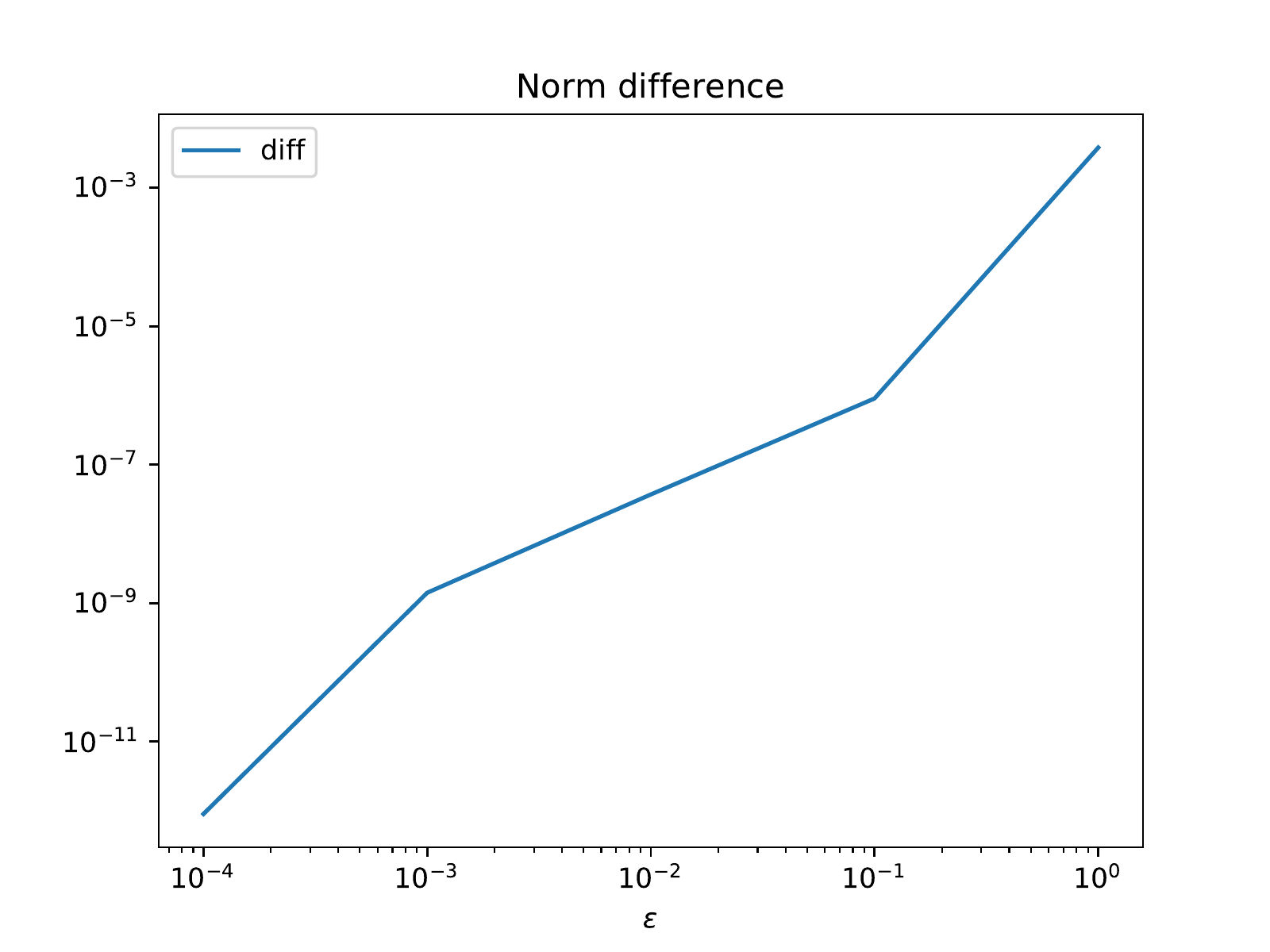}
\caption{Norm difference $\vert \Vert \varphi^\eps \Vert_{\mathrm L^2} - \Vert \varphi^0 \Vert_{\mathrm L^2} \vert$}
\end{subfigure}
\caption{Function $\kappa$ inside the integral. Third example.}
\label{fig:conv-rate-sin-haar-haar-2}
\end{figure}

\subsection{Collision term $\kappa$ outside the integral}
It is interesting to compare the converge rate of problem \eqref{eq:ex-numerics-2-eps} with
\begin{equation}\label{eq:ex-numerics-eps}
\left\{
\begin{aligned}
\partial_t \varphi^\eps(t,E) + \sigma\left(\frac{E}{\eps}\right) \varphi^\eps(t,E) & = \kappa\left(\frac{E}{\eps}\right) \int_{E_{\min}}^{E_{\max}} \varphi^\eps(t,E')\, {\rm d}E'
\\[0.2 cm]
\varphi^\eps(0,E) & = \varphi_{\rm in}(E)
\end{aligned}
\right.
\end{equation}
where the collision kernel $\kappa$ is outside of the integral. Figure \ref{fig:conv-rate-k-outside} gives numerical evidence of weak and strong convergence of \eqref{eq:ex-numerics-eps} for the same three choices of $\sigma$, $\kappa$ and $\varphi_{\rm in}$ that were studied in the previous section. Note that, due to the fact that $\kappa$ is now outside of the integral, the convergence rate is higher and occurs at a rate of about $\eps^2$. Note also that in this case the regularity of the initial condition and the oscillatory coefficients significantly affect the convergence rate. For all cases, we also observe strong convergence (the plots are very similar to the ones of the previous section -- so we omit them).

\begin{figure}[h!]
\centering
\begin{subfigure}[b]{0.45\textwidth}
\includegraphics[width=\textwidth]{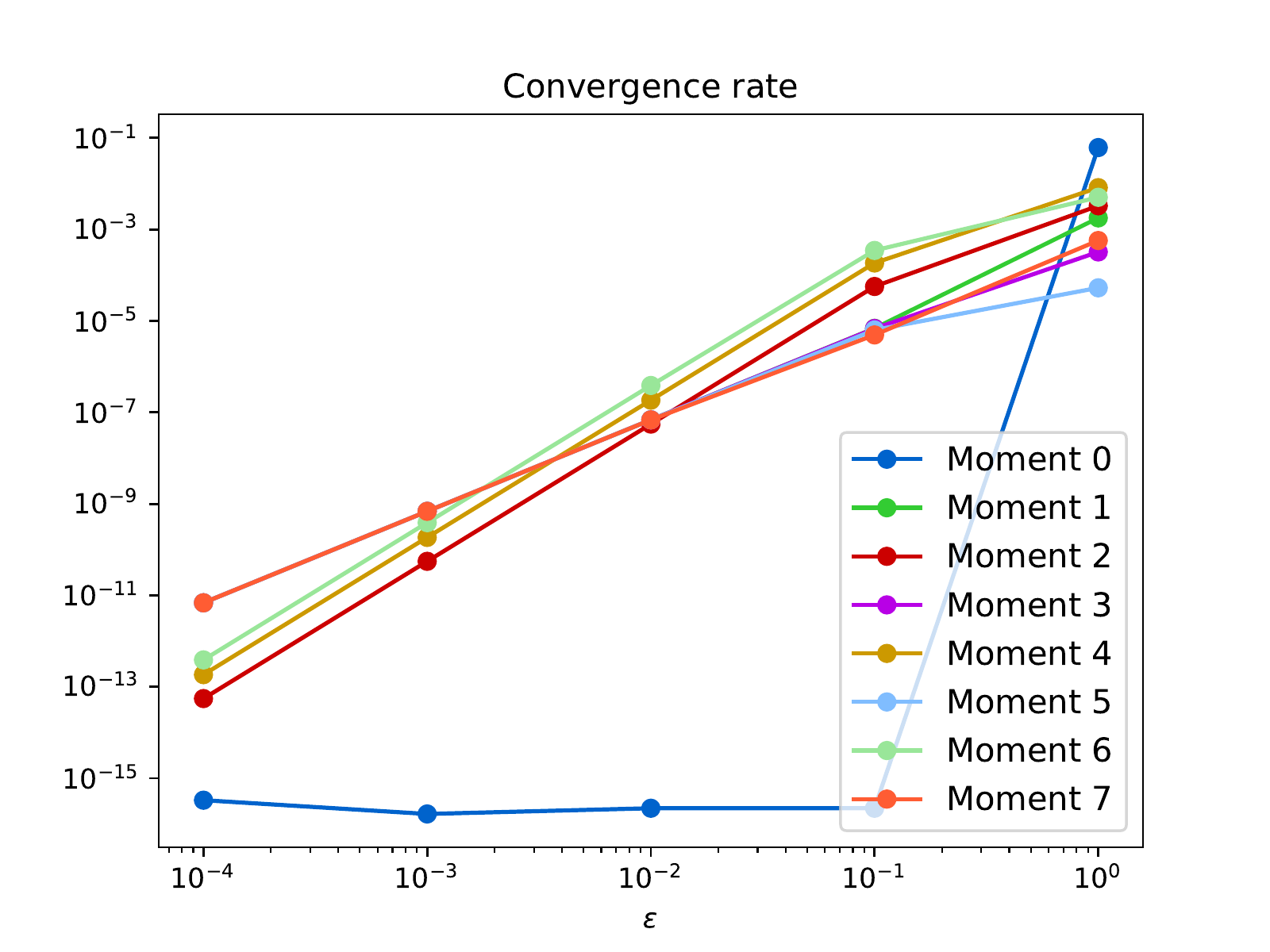}
\caption{First example}
\end{subfigure}
~
\begin{subfigure}[b]{0.45\textwidth}
\includegraphics[width=\textwidth]{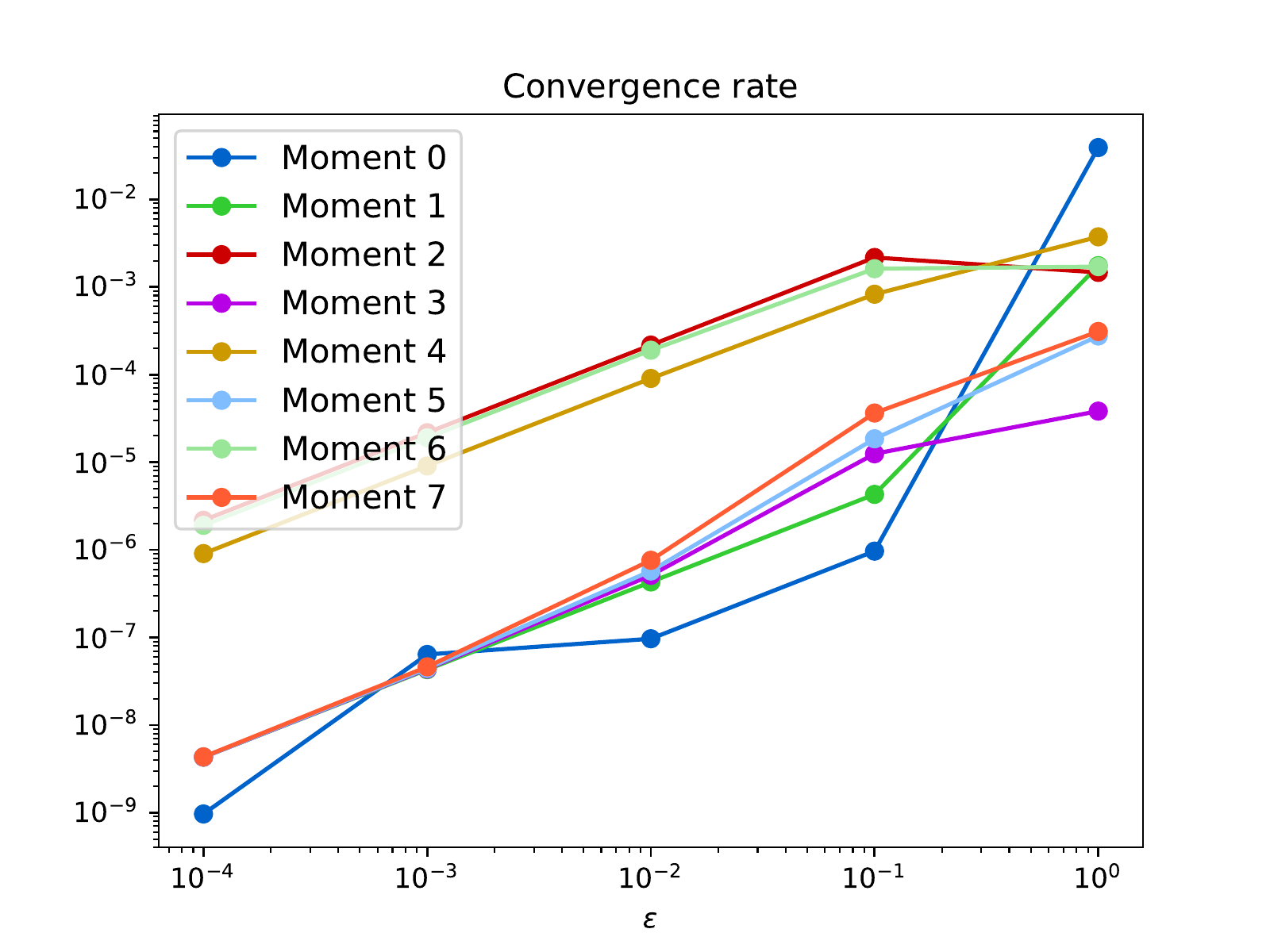}
\caption{Second example}
\end{subfigure}
\\
\begin{subfigure}[b]{0.45\textwidth}
\includegraphics[width=\textwidth]{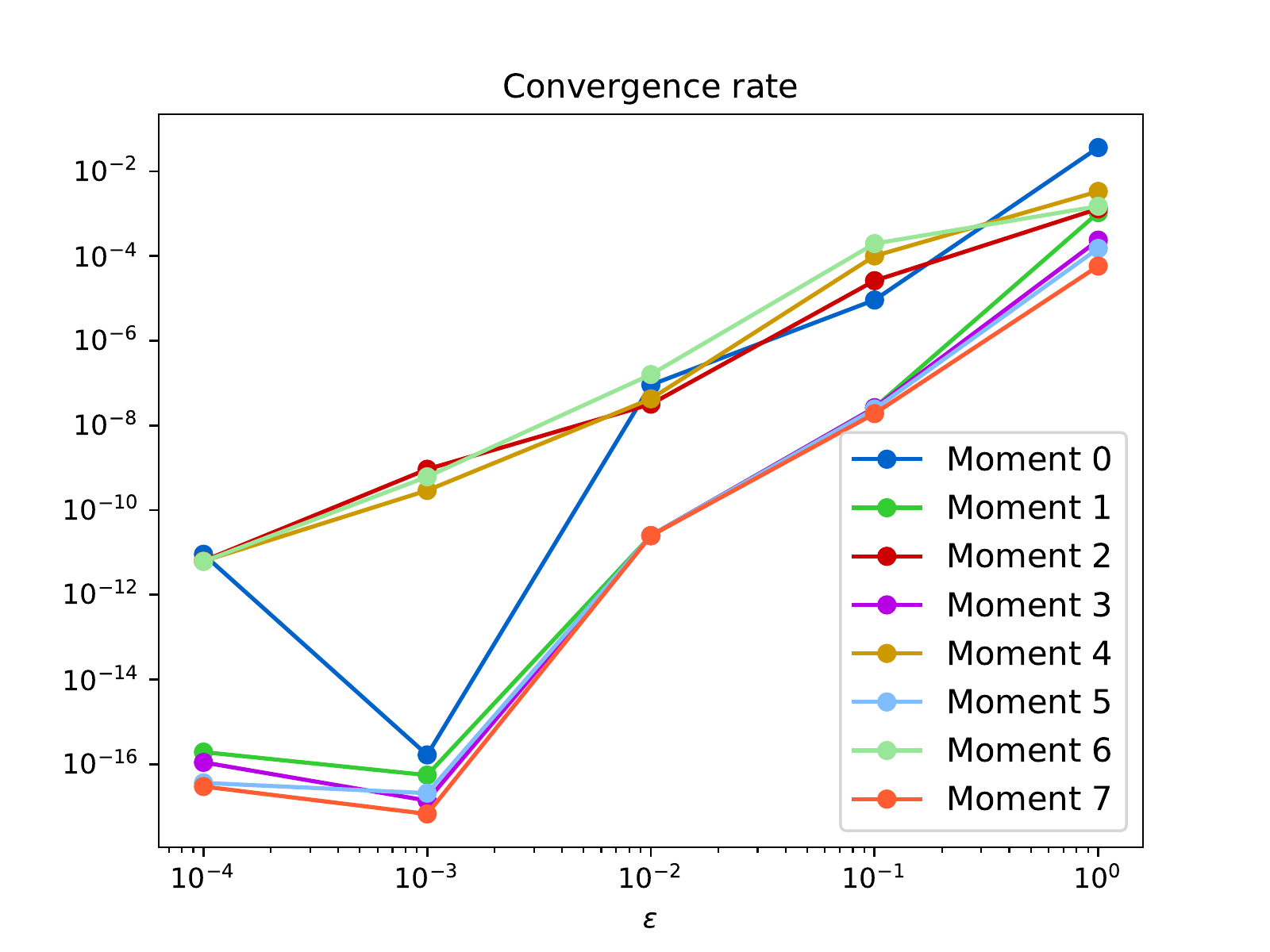}
\caption{Third example}
\end{subfigure}
\caption{Function $\kappa$ outside the integral: Convergence rate in $\eps$ of the error $e_k^\eps$.}
\label{fig:conv-rate-k-outside}
\end{figure}

\appendix
\section{Harmonic oscillator}
\label{sec:complex}
Taking inspiration from Tartar's work on memory effects \cite{Tartar1989}, we analyse the asymptotic behaviour of a differential equation for the unknown $u^\eps:[0,\infty)\times\Omega\mapsto \mathbb{C}$
\begin{equation}\label{eq:model-imag}
\left\{
\begin{aligned}
\partial_t u^\eps(t,x) + i\, b^\eps(x) u^\eps(t,x) & = 0 \quad \qquad \mbox{ for }t>0, \, x\in\Omega,
\\[0.1 cm]
u^\eps(0,x) & = u_{\rm in}(x) \quad \mbox{ for }x\in\Omega.
\end{aligned}\right.
\end{equation}
The coefficient family $\{b^\eps(x)\}$ is assumed to be a family of bounded real-valued function. Taking $u^\eps = w^\eps + i v^\eps$, we obtain a system of equations for the real and imaginary parts:
\begin{equation}\label{eq:system-w-v}
\left\{
\begin{aligned}
\partial_t w^\eps(t,x) - b^\eps(x) v^\eps(t,x) & = 0 \qquad \quad \quad \qquad \mbox{ for }t>0, \, x\in\Omega,
\\[0.1 cm]
\partial_t v^\eps(t,x) + b^\eps(x) w^\eps(t,x) & = 0 \qquad \quad  \quad \qquad \mbox{ for }t>0, \, x\in\Omega,
\\[0.1 cm]
\left( w^\eps, v^\eps \right)(0,x) & = \left( w_{\rm in}, v_{\rm in} \right)(x) \quad \mbox{ for }x\in\Omega,
\end{aligned}\right.
\end{equation}
where we have used that $u_{\rm in}(x) = w_{\rm in}(x) + i \, v_{\rm in}(x)\mbox{ for }x\in\Omega$. Define the vector function $\mathtt{U}^\eps:[0,\infty)\times\Omega\mapsto \R^2$
\[
\mathtt{U}^\eps(t,x) := 
\left( \begin{matrix}
w^\eps
\\
v^\eps
\end{matrix}
\right)(t,x).
\]
Consider a skew-symmetric matrix
\[
\mathtt{A} := 
\left(
\begin{matrix}
0 & 1
\\[0.2 cm]
-1 & 0
\end{matrix}
\right).
\]
Then the system of equations \eqref{eq:system-w-v} can be rewritten as
\begin{equation}\label{eq:ode-system}
\left\{
\begin{aligned}
\partial_ t \mathtt{U}^\eps(t,x) & = b^\eps(x)\, \mathtt{A}\, \mathtt{U}^\eps(t,x) \, \, \quad \qquad \mbox{ for }t>0, \, x\in\Omega,
\\[0.2 cm]
\mathtt{U}^\eps(0,x) & = \mathtt{U}_{\rm in}(x) := \left( \begin{matrix}
w_{\rm in}
\\
v_{\rm in}
\end{matrix}
\right)(x) \quad \mbox{ for }x\in\Omega.
\end{aligned}\right.
\end{equation}
Note that the system \eqref{eq:ode-system} is nothing but the harmonic oscillator equation with heterogeneous coefficients. Note further that memory effects have been shown to appear in the homogenization limit for the second order ordinary differential equations (see the work of J-S.~Jiang, K.-H .~Kuo and C-K.~Lin  \cite{JKL2005}). 
\begin{thm}\label{thm:homogen-harmonic}
Let $\mathtt{U}^\eps(t,x)$ be the solution to \eqref{eq:ode-system}. Suppose that coefficient family $b^\eps(x)$ is a bounded family of real-valued functions which satisfy
\[
-\infty < b_{\rm min} \le b^\eps(x) \le b_{\rm max} < \infty
\]
uniformly in $\eps$ and in $x\in\Omega$. Then we have
\[
\mathtt{U}^\eps \weak \mathtt{U}^0 \qquad \qquad \mbox{ weakly }\ast \mbox{ in }\mathrm L^\infty
\]
with the limit $\mathtt{U}^0$ satisfying the following integro-differential system:
\begin{equation}\label{eq:limit-equation-U0-appendix}
\left\{
\begin{aligned}
\partial_t \mathtt{U}^0(t,x) + b^\ast(x) \mathtt{A} \mathtt{U}^0(t,x) & = \int_0^t \mathbb{K}(x,t-s) \mathtt{U}^0(s,x)\, {\rm d}s \qquad \mbox{ for }t>0, \, x\in\Omega,
\\[0.2 cm]
\mathtt{U}^0(0,x) & = \mathtt{U}_{\rm in}(x) \qquad \qquad \qquad \qquad \qquad  \mbox{ for }x\in\Omega.
\end{aligned}\right.
\end{equation}
The memory kernel in the limit equation is characterised in terms of its Laplace transform in the time variable. More precisely, we have
\begin{align}\label{eq:Laplace-memory-kernel}
\widehat{\mathbb{K}}(p,x) = p\, {\rm Id} + b^\ast(x)\, \mathtt{A} - \mathtt{B}(p,x)
\qquad
\mbox{ for }p>0, \, x\in\Omega
\end{align}
where the matrix $\mathtt{B}$ is 
\begin{align}\label{eq:defn-matrix-B}
\mathtt{B}(p,x) := \left( \int_{b_{\rm min}}^{b_{\rm max}} \frac{1}{p^2+\lambda^2} \left( \begin{matrix} p & \lambda \\[0.2 cm] -\lambda & p\end{matrix}\right) \, {\rm d}\nu_x(\lambda) \right)^{-1}.
\end{align}
where $\{\nu_x\}_{x\in\Omega}$ is the family of Young measures associated with the bounded family $b^\eps(x)$.
\end{thm}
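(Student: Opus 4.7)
The plan is to follow the same three-step strategy used by Tartar in Theorem \ref{thm:tartar-compare}: solve \eqref{eq:ode-system} in closed form, pass to the weak-$\ast$ limit using the Young measures generated by $\{b^\eps\}$, and identify the memory kernel by Laplace transformation. The key structural fact is $\mathtt{A}^2 = -{\rm Id}$, which gives the closed-form matrix exponential $e^{tb\mathtt{A}} = \cos(tb)\,{\rm Id} + \sin(tb)\,\mathtt{A}$. Since \eqref{eq:ode-system} is a linear ODE in $t$ with $x$ a parameter, its unique solution is
\[
\mathtt{U}^\eps(t,x) = \bigl[\cos(tb^\eps(x))\,{\rm Id} + \sin(tb^\eps(x))\,\mathtt{A}\bigr]\mathtt{U}_{\rm in}(x),
\]
and the identity $|\mathtt{U}^\eps(t,x)|=|\mathtt{U}_{\rm in}(x)|$ yields a uniform $L^\infty([0,\infty)\times\Omega)$ bound.

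Next I would use that, up to extraction, the uniformly bounded family $\{b^\eps\}$ generates a Young measure $\{\nu_x\}_{x\in\Omega}$ supported in $[b_{\min},b_{\max}]$: for every $\phi\in C([b_{\min},b_{\max}])$,
\[
\phi(b^\eps(x)) \weakStar \int_{b_{\min}}^{b_{\max}} \phi(\lambda)\,{\rm d}\nu_x(\lambda)\qquad \text{weakly-}\ast \text{ in }L^\infty(\Omega).
\]
Applied to $\phi(\lambda)=\cos(t\lambda)$ and $\phi(\lambda)=\sin(t\lambda)$ at each fixed $t$, together with dominated convergence in $t$, this yields $\mathtt{U}^\eps \weakStar \mathtt{U}^0$ in $L^\infty([0,\infty)\times\Omega)$ with the explicit representation
\[
\mathtt{U}^0(t,x) = \biggl[\int_{b_{\min}}^{b_{\max}} \bigl(\cos(t\lambda)\,{\rm Id}+\sin(t\lambda)\,\mathtt{A}\bigr)\,{\rm d}\nu_x(\lambda)\biggr]\mathtt{U}_{\rm in}(x).
\]

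To match the formulation \eqref{eq:limit-equation-U0-appendix}, I would Laplace-transform in $t$. Using $\widehat{\cos(t\lambda)}(p)=p/(p^2+\lambda^2)$ and $\widehat{\sin(t\lambda)}(p)=\lambda/(p^2+\lambda^2)$, the representation above gives $\widehat{\mathtt{U}^0}(p,x) = \mathtt{B}(p,x)^{-1}\mathtt{U}_{\rm in}(x)$ with $\mathtt{B}$ exactly as in \eqref{eq:defn-matrix-B}. Invertibility follows by decomposing the integrand in the real commutative subalgebra spanned by $\{{\rm Id},\mathtt{A}\}$: the coefficient of ${\rm Id}$ equals $\int p/(p^2+\lambda^2)\,{\rm d}\nu_x(\lambda)>0$ for $p>0$, so the inverse is explicit. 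On the other hand, Laplace-transforming \eqref{eq:limit-equation-U0-appendix} and using $\mathtt{U}^0(0,x)=\mathtt{U}_{\rm in}(x)$ yields
\[
\bigl(p\,{\rm Id}+b^\ast(x)\,\mathtt{A}-\widehat{\mathbb{K}}(p,x)\bigr)\widehat{\mathtt{U}^0}(p,x) = \mathtt{U}_{\rm in}(x).
\]
Equating the two expressions forces $\widehat{\mathbb{K}}(p,x) = p\,{\rm Id} + b^\ast(x)\,\mathtt{A} - \mathtt{B}(p,x)$, which is precisely \eqref{eq:Laplace-memory-kernel}.

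The main obstacle I anticipate is making this last step fully rigorous: since $p\,{\rm Id}+b^\ast(x)\,\mathtt{A}$ grows linearly in $p$, the kernel $\mathbb{K}(x,\cdot)$ is a priori only a distribution on $[0,\infty)$, and one must extract its singular (Dirac-type) and regular parts and verify that the convolution in \eqref{eq:limit-equation-U0-appendix} makes classical sense for $t>0$. This can be handled by inverting the Laplace transform of the $2\times 2$ rational matrix $\mathtt{B}(p,x)^{-1}$ entry-wise using partial fractions, exactly in the spirit of the consistency result established after Theorem \ref{thm:exm:ode}; the technicality is just the matricial analogue and does not alter the structure of the argument.
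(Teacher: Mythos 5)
Your proposal is correct and takes a genuinely different, in fact more explicit, route than the paper. The paper Laplace-transforms the $\eps$-problem \emph{first}, obtaining $\widehat{\mathtt{U}^\eps}(p,x) = \frac{1}{p^2+|b^\eps(x)|^2}(p\,{\rm Id}+b^\eps(x)\mathtt{A})\mathtt{U}_{\rm in}(x)$, then applies the Young-measure compactness theorem to the continuous functions $s\mapsto p/(p^2+s^2)$ and $s\mapsto s/(p^2+s^2)$ at fixed $p>0$, arrives at $\widehat{\mathtt{U}^0}(p,x)=\mathtt{B}(p,x)^{-1}\mathtt{U}_{\rm in}(x)$, and only then manipulates this algebraically and takes the inverse transform. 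You instead solve the $\eps$-ODE in the time domain using $e^{t b\mathtt{A}}=\cos(tb)\,{\rm Id}+\sin(tb)\,\mathtt{A}$, apply Young measures to $\lambda\mapsto\cos(t\lambda)$ and $\lambda\mapsto\sin(t\lambda)$ at fixed $t$, and obtain the explicit time-domain representation $\mathtt{U}^0(t,x)=\bigl[\int(\cos(t\lambda){\rm Id}+\sin(t\lambda)\mathtt{A})\,{\rm d}\nu_x(\lambda)\bigr]\mathtt{U}_{\rm in}(x)$, using the Laplace transform only at the end to match the form \eqref{eq:Laplace-memory-kernel}. Both proofs rest on the same two pillars (Young measures tested against bounded continuous functions, and inversion in the commutative subalgebra spanned by ${\rm Id}$ and $\mathtt{A}$) and are of comparable length, but your variant has the advantage of producing a closed-form $\mathtt{U}^0$ in the $t$ variable which the paper never writes down; the paper's variant avoids the (minor) step of upgrading $t$-pointwise weak-$\ast$ convergence to joint convergence via dominated convergence.

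One small correction to your final paragraph, which flags a subtlety that the paper in fact glosses over. You write that $p\,{\rm Id}+b^\ast(x)\mathtt{A}$ grows linearly so $\mathbb{K}$ is a priori only a distribution. The actual behaviour is milder but still nontrivial: a short expansion in the algebra $\{\,a\,{\rm Id}+b\,\mathtt{A}\,\}$ gives $\mathtt{B}(p,x)=p\,{\rm Id}-b^\ast(x)\mathtt{A}+O(1/p)$ as $p\to\infty$, so the linear term in $p$ cancels and
\begin{align*}
\widehat{\mathbb{K}}(p,x) \;=\; p\,{\rm Id}+b^\ast(x)\mathtt{A}-\mathtt{B}(p,x)\;\longrightarrow\; 2b^\ast(x)\,\mathtt{A}\qquad\text{as }p\to\infty.
\end{align*}
Since $\widehat{\mathbb{K}}$ tends to a nonzero constant rather than to $0$, the kernel $\mathbb{K}(\cdot,x)$ contains exactly the Dirac mass $2b^\ast(x)\mathtt{A}\,\delta_0(\tau)$ plus an integrable remainder (for $\nu_x=\delta_{b_0}$ the kernel reduces to the pure Dirac $2b_0\mathtt{A}\delta_0$, as it must since the limit then coincides with the $\eps$-problem). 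This Dirac contribution is precisely the price of writing the left-hand side of \eqref{eq:limit-equation-U0-appendix} as $\partial_t+b^\ast\mathtt{A}$ while the original evolution \eqref{eq:ode-system} is $\partial_t-b^\eps\mathtt{A}$; had the limit equation been written with $\partial_t-b^\ast\mathtt{A}$, the kernel would satisfy $\widehat{\mathbb{K}}(p,x)\to0$ as in Tartar's scalar case. So your instinct that the kernel needs to be decomposed into a singular (Dirac) part and a regular part before the convolution in \eqref{eq:limit-equation-U0-appendix} makes classical sense is exactly right, and the partial-fraction inversion you propose is the natural way to carry it out.
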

Our proof of the above theorem, similar to the analysis of Luc Tartar in \cite{Tartar1989}, goes via the Laplace transform. Furthermore, as we are making no structural assumptions (such as periodicity) on the coefficient family $b^\eps(x)$, we need to use the notion of Young measures. Hence, before we give the proof of Theorem \ref{thm:homogen-harmonic}, we recall a cornerstone result of the theory of compensated compactness which gives the existence of certain probability measures called the \textsc{young mesaures} \cite{Tartar_1979}.
\begin{thm}\label{thm:young-measures}
Suppose $[\alpha, \beta]\subset \R$ be a bounded interval and let $\Omega\subset\R^d$ be an open set. Consider the family (indexed by $\eps$) of measurable functions $f^\eps(x):\Omega\mapsto\R$ such that 
\[
f^\eps(x) \in [\alpha, \beta]\qquad \mbox{ for }x\in\Omega,
\]
uniformly in $\eps$. Then, there exists a subsequence (still indexed $f^\eps(x)$) and there exists a family of probability measures $\{\nu_x\}_{x\in\Omega}$ with ${\rm supp}\, \nu_x\subset[\alpha,\beta]$ such that if $\mathcal{G}\in\mathrm C(\R)$, then
\begin{align}
\mathcal{G}(f^\eps)(x) \weak \int_\alpha^\beta \mathcal{G}(\lambda) \, {\rm d}\nu_x(\lambda) \qquad \mbox{ weakly }\ast\mbox{ in }\rL^\infty(\Omega),
\end{align}
i.e., for any arbitrary $\varphi\in\rL^1(\Omega)$, we have
\[
\lim_{\eps\to0} \int_\Omega \mathcal{G}(f^\eps)(x) \varphi(x)\, {\rm d}x = \int_\Omega \int_\alpha^\beta \mathcal{G}(\lambda) \varphi(x) \, {\rm d}\nu_x(\lambda) \, {\rm d}x.
\]
\end{thm}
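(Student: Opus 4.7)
The plan is to follow Tartar's Laplace-transform strategy from \cite{Tartar1989}, combined with the Young measure framework of Theorem~\ref{thm:young-measures} (which replaces periodicity, as we are only assuming uniform boundedness of $b^\eps$). First I would observe that $\mathtt{A}^2 = -\mathrm{Id}$, so the matrix exponential is explicit:
\begin{equation*}
\mathtt{U}^\eps(t,x) = e^{tb^\eps(x)\mathtt{A}}\mathtt{U}_{\rm in}(x) = \bigl[\cos(tb^\eps(x))\,\mathrm{Id} + \sin(tb^\eps(x))\,\mathtt{A}\bigr]\mathtt{U}_{\rm in}(x).
\end{equation*}
Since $e^{tb^\eps(x)\mathtt{A}}$ is a rotation, $\vert\mathtt{U}^\eps(t,x)\vert = \vert\mathtt{U}_{\rm in}(x)\vert$ pointwise, giving uniform $\rL^\infty$ bounds in $(t,x)$ and therefore a weak-$\ast$ cluster point $\mathtt{U}^0 \in \rL^\infty((0,\infty)\times\Omega)$ along a subsequence.

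The next step is to take the Laplace transform in $t$ of \eqref{eq:ode-system}. The ODE becomes the algebraic identity $(p\,\mathrm{Id} - b^\eps(x)\mathtt{A})\widehat{\mathtt{U}^\eps}(p,x) = \mathtt{U}_{\rm in}(x)$, and inverting the $2\times 2$ matrix yields
\begin{equation*}
\widehat{\mathtt{U}^\eps}(p,x) = \frac{1}{p^2+(b^\eps(x))^2}\begin{pmatrix} p & b^\eps(x) \\ -b^\eps(x) & p\end{pmatrix}\mathtt{U}_{\rm in}(x).
\end{equation*}
The right-hand side depends on $b^\eps$ only through the continuous bounded functions $\lambda\mapsto p/(p^2+\lambda^2)$ and $\lambda\mapsto \lambda/(p^2+\lambda^2)$. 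Theorem~\ref{thm:young-measures} therefore identifies, for each fixed $p>0$, the weak-$\ast$ limit in $\rL^\infty(\Omega)$ as $\mathtt{B}(p,x)^{-1}\mathtt{U}_{\rm in}(x)$ with $\mathtt{B}$ defined in \eqref{eq:defn-matrix-B}. On the other hand, since $\psi(t,x) = e^{-pt}\varphi(x)$ lies in $\rL^1((0,\infty)\times\Omega)$ for every $\varphi\in\rL^1(\Omega)$ and $p>0$, the weak-$\ast$ convergence $\mathtt{U}^\eps\weakStar\mathtt{U}^0$ directly yields $\widehat{\mathtt{U}^\eps}(p,\cdot)\weakStar\widehat{\mathtt{U}^0}(p,\cdot)$. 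The two limits must coincide, so $\mathtt{B}(p,x)\widehat{\mathtt{U}^0}(p,x) = \mathtt{U}_{\rm in}(x)$.

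Finally, I would close the loop by computing the Laplace transform of the proposed limit equation \eqref{eq:limit-equation-U0-appendix}: the convolution becomes a product, giving $(p\,\mathrm{Id}+b^*(x)\mathtt{A}-\widehat{\mathbb{K}}(p,x))\widehat{\mathtt{U}^0}(p,x)=\mathtt{U}_{\rm in}(x)$. Comparing with the identity $\mathtt{B}(p,x)\widehat{\mathtt{U}^0}(p,x)=\mathtt{U}_{\rm in}(x)$ just obtained, and using the arbitrariness of $\mathtt{U}_{\rm in}(x)$, one reads off formula \eqref{eq:Laplace-memory-kernel}. The scalar weak-$\ast$ limit $b^*(x) = \int \lambda\, d\nu_x(\lambda)$ is obtained by a single application of Theorem~\ref{thm:young-measures} to $\mathcal{G}(\lambda)=\lambda$, and uniqueness of the Laplace transform upgrades the subsequential convergence to convergence of the full family.

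The principal obstacle is not the chain of weak-$\ast$ passages, but the proper interpretation of \eqref{eq:limit-equation-U0-appendix}. An asymptotic expansion of $\mathtt{B}(p,x)^{-1}$ as $p\to\infty$ shows $\mathtt{B}(p,x) = p\,\mathrm{Id} - b^*(x)\mathtt{A} + O(1/p)$, so $\widehat{\mathbb{K}}(p,x)$ tends to $2b^*(x)\mathtt{A}$ rather than to zero. The inverse Laplace transform $\mathbb{K}(\cdot,x)$ is therefore generically not a classical integrable function but a tempered distribution with a Dirac mass at $\tau=0$ on top of an integrable correction; the statement must be read in the sense of distributional convolution, which is faithfully encoded by the Laplace identity \eqref{eq:Laplace-memory-kernel}. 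Verifying that $\mathtt{B}(p,x)$ is invertible for every $p>0$ (which reduces to strict positivity of the determinant $p^2\alpha_x(p)^2+\beta_x(p)^2$ with $\alpha_x(p)=\int (p^2+\lambda^2)^{-1}d\nu_x$ and $\beta_x(p)=\int \lambda(p^2+\lambda^2)^{-1}d\nu_x$) and controlling the symmetric/skew-symmetric decomposition of its expansion are the remaining checks.
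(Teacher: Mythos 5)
You have proved the wrong statement. The theorem you were asked to establish is Theorem~\ref{thm:young-measures}, the fundamental existence theorem for Young measures: given a uniformly bounded family $f^\eps$ with values in $[\alpha,\beta]$, one must \emph{construct} a subsequence and a family of probability measures $\{\nu_x\}_{x\in\Omega}$ representing the weak-$\ast$ limits of all compositions $\mathcal{G}(f^\eps)$. Your proposal instead proves Theorem~\ref{thm:homogen-harmonic} (the harmonic-oscillator homogenization result), and at its crucial step it \emph{invokes} Theorem~\ref{thm:young-measures} as a black box (``Theorem~\ref{thm:young-measures} therefore identifies, for each fixed $p>0$, the weak-$\ast$ limit\dots''). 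As an argument for the statement in question this is circular: nothing in your text constructs the measures $\nu_x$, establishes their $x$-measurability, shows they are probability measures supported in $[\alpha,\beta]$, or proves that a single subsequence works simultaneously for every $\mathcal{G}\in\mathrm C(\R)$. Note also that the paper itself does not prove this theorem; it recalls it from Tartar's compensated-compactness work \cite{Tartar_1979}, so the expected content here is precisely that classical construction, not its application.

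A genuine proof would run along the following lines: view $x\mapsto\delta_{f^\eps(x)}$ as a bounded family in the space of weak-$\ast$ measurable maps from $\Omega$ to $\mathcal{M}([\alpha,\beta])$, which is the dual of the separable Banach space $\rL^1(\Omega;\mathrm C([\alpha,\beta]))$; extract a weak-$\ast$ convergent subsequence by Banach--Alaoglu (or, more elementarily, diagonalize over a countable dense set $\{\mathcal{G}_k\}\subset\mathrm C([\alpha,\beta])$ and apply the Riesz representation theorem for a.e.\ $x$ to the resulting positive, normalized linear functionals); then verify that the limit family $\nu_x$ consists of probability measures with support in $[\alpha,\beta]$, since positivity and total mass one are preserved when testing against $\varphi(x)\,\mathcal{G}(\lambda)$ with $\varphi\ge 0$ and against $\varphi(x)\otimes 1$, and since only the values of $\mathcal{G}$ on $[\alpha,\beta]$ enter. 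The convergence statement for arbitrary $\mathcal{G}\in\mathrm C(\R)$ then follows by testing with $\varphi(x)\,\mathcal{G}(\lambda)\in\rL^1(\Omega;\mathrm C([\alpha,\beta]))$. None of these steps appear in your proposal, so the proof of the assigned statement is entirely missing; what you have written is, in substance, a (reasonable) reproduction of the paper's proof of Theorem~\ref{thm:homogen-harmonic}, which is a different result.
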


\begin{proof}[Proof of Theorem \ref{thm:homogen-harmonic}]
Taking the Laplace transform of the evolution in \eqref{eq:ode-system} results in
\begin{align*}
p\, \widehat{\mathtt{U}^\eps}(p,x) - \mathtt{U}_{\rm in}(x) = b^\eps(x)\, \mathtt{A} \, \widehat{\mathtt{U}^\eps}(p,x),
\end{align*}
which yields
\begin{align}\label{eq:family-LU-eps}
\widehat{\mathtt{U}^\eps}(p,x) = \frac{1}{p^2 + \left\vert b^\eps(x) \right\vert^2}
\left(
\begin{matrix}
p & b^\eps(x)
\\[0.2 cm]
-b^\eps(x) & p
\end{matrix}
\right) \mathtt{U}_{\rm in}(x).
\end{align}

For each fixed $p>0$, the above expression defines a family of functions (functions of the $x$ variable) indexed by $\eps$. By fixing a $p>0$, we let $\eps$ go to zero and arrive at a limit point for the family $\left\{\widehat{\mathtt{U}^\eps}(p,x)\right\}$. Now, in Theorem \ref{thm:young-measures}, let us take the continuous function $\mathcal{G}(s):\R\to\R$ from one of the below choices
\[
\frac{p}{p^2 + s^2}; \qquad
\frac{s}{p^2 + s^2}.
\]
Then, there exists a subsequence (still indexed $b^\eps(x)$) and there exists a family of probability measures $\{\nu_x\}_{x\in\Omega}$ with ${\rm supp}\, \nu_x\subset[b_{\rm min},b_{\rm max}]$ such that we have the following $\rL^\infty(\Omega)$ weak $\ast$ limits:
\begin{align*}
\frac{p}{p^2 + \left\vert b^\eps(x) \right\vert^2} & \weak \int_{b_{\rm min}}^{b_{\rm max}} \frac{p}{p^2 + \lambda^2}\, {\rm d}\nu_x(\lambda),
\\[0.3 cm]
\frac{b^\eps(x)}{p^2 + \left\vert b^\eps(x) \right\vert^2} & \weak \int_{b_{\rm min}}^{b_{\rm max}} \frac{\lambda}{p^2 + \lambda^2}\, {\rm d}\nu_x(\lambda).
\end{align*}
We are now equipped to pass to the limit in the family \eqref{eq:family-LU-eps}:
\begin{align*}
\widehat{\mathtt{U}^\eps}(p,x) \weak & \mathtt{V}^0(p,x) := \left( \int_{b_{\rm min}}^{b_{\rm max}} \frac{1}{p^2+\lambda^2} \left( \begin{matrix} p & \lambda \\[0.2 cm] -\lambda & p\end{matrix}\right) \, {\rm d}\nu_x(\lambda) \right) \mathtt{U}_{\rm in}(x)
\end{align*}
weakly $\ast$ in $\rL^\infty(\Omega)$. We have hence proved
\begin{align}\label{eq:intermediate-uo-vo}
\mathtt{B}(p,x) \mathtt{V}^0(p,x) = \mathtt{U}_{\rm in}(x)
\end{align}
where the matrix $\mathtt{B}$ is defined in \eqref{eq:defn-matrix-B}. We rewrite \eqref{eq:intermediate-uo-vo} as
\begin{align}\label{eq:limit-V0}
p\, \mathtt{V}^0(p,x) + b^\ast(x) \mathtt{A} \mathtt{V}^0(p,x) - \mathtt{U}_{\rm in}(x) = \Big( p\, {\rm Id} + b^\ast(x)\, {\rm Id} - \mathtt{B}(p,x) \Big) \mathtt{V}^0(p,x)
\end{align}
where
\[
b^\ast(x) := \int_{b_{\rm min}}^{b_{\rm max}} \lambda \, {\rm d}\nu_x(\lambda)
\]
is the $\rL^\infty(\Omega)$ weak $\ast$ limit of the family $b^\eps(x)$. Taking the inverse Laplace transform of \eqref{eq:limit-V0}, we find that the inverse Laplace transform of the weak limit $\mathtt{V}^0$ (which coincides with $\mathtt{U}^0$) satisfies \eqref{eq:limit-equation-U0-appendix}.
\end{proof}

\begin{rem}
Note that the above derivation of the memory effects induced by homogenization for \eqref{eq:model-imag} has given an expression for the memory kernel in terms of its Laplace transform. It is of interest, however, to give the expression of the kernel $\mathbb{K}$ in terms of the real $t$ variable. This needs taking the inverse Laplace of the expression \eqref{eq:Laplace-memory-kernel}.
\end{rem}

\section*{Acknowledgments}
This work was supported by the ANR project \textit{Kimega} (ANR-14-ACHN-0030-01) and by the Italian Ministry of Education, University and Research (\textit{Dipartimenti di Eccellenza} program 2018-2022, Dipartimento di Matematica 'F. Casorati', Universit\`a degli Studi di Pavia). We also received funding from a PEPS grant from the INSMI (Institut National des Sciences Mathématiques et de leurs Interactions). H.~Hutridurga and O.~Mula started to work on this paper while enjoying the kind hospitality of the Hausdorff Institute for Mathematics in Bonn during the trimester program on multiscale problems in 2017. H. Hutridurga also acknowledges the support of the EPSRC programme grant ``Mathematical fundamentals of Metamaterials for multiscale Physics and Mechanics'' (EP/L024926/1). The authors thank Vishal Vasan (ICTS) for posing the question on harmonic oscillator addressed here in the Appendix \ref{sec:complex}.

\bibliographystyle{bibstyle}
\bibliography{references}

\end{document}